\newtheorem{thm}{Theorem}[section]
 \newtheorem{lem}[thm]{Lemma}
\theoremstyle{remark}
\newtheorem{rmk}[thm]{Remark}
\title[Biharmonic Neumann Quantitative Inequality]{On the stability of some isoperimetric inequalities for the fundamental tones of free plates}
\author{D.\ Buoso}
\thanks{Davide Buoso, Dipartimento di Scienze Matematiche ``G.L.\ Lagrange'', Politecnico di Torino, Corso Duca degli Abruzzi, 24, 10129 Torino, Italy. Email: davide.buoso@polito.it (corresponding author).}
\author{L.M.\ Chasman}
\thanks{L. Mercredi Chasman, Division of Science and Mathematics, University of Minnesota -- Morris, 600 East 4th Street, Morris, MN 56267, USA. Email: chasmanm@morris.umn.edu.}
\author{L.\ Provenzano}
\thanks{Luigi Provenzano, Dipartimento di Matematica, Universit\`{a} degli Studi di Padova, Via Trieste,  63, 35126 Padova, Italy. {\it Current affiliation}: \'Ecole Polytechnique F\'ederale de Lausanne, Section de Math\'ematique, Station 8 CH-1015, Lausanne, Switzerland. Email: luigi.provenzano@epfl.ch.}
\keywords{Biharmonic operator, Neumann boundary conditions, Steklov boundary conditions, eigenvalues, quantitative isoperimetric inequality, sharpness.}
\subjclass[2010]{\text{Primary 35J30. Secondary 35P15, 49R50, 74K20}}
\begin{document}

\begin{abstract} We provide a quantitative version of the isoperimetric inequality for the fundamental tone of a biharmonic Neumann problem. Such an inequality has been recently established by Chasman adapting Weinberger's argument for the corresponding second order problem. Following a scheme introduced by Brasco and Pratelli for the second order case, we prove that a similar quantitative inequality holds also for the biharmonic operator. We also prove the sharpness of both such an inequality and the corresponding one for the biharmonic Steklov problem.
\end{abstract}

\maketitle

\section{Introduction}
%(some background on Quantitative inequalities and biharmonic problems)

The stability of isoperimetric inequalities is an important question that has gained significant interest in recent decades. For example, the celebrated Faber-Krahn inequality for the smallest eigenvalue of the Dirichlet Laplacian, 
\[
\lambda_1(\Omega)\ge\lambda_1(\Omega^*),
\]
can be improved in the following quantitative form:
\begin{equation}
\lambda_1(\Omega)\ge\lambda_1(\Omega^*)(1+C\mathcal{A}(\Omega)^2),
\label{quantfk}
\end{equation}
for some constant $C>0$. Here $\Omega\subset\mathbb{R}^N$ is a bounded open set, $N\geq2$, $\Omega^*$ is a ball such that $|\Omega|=|\Omega^*|$, and $\mathcal A(\Omega)$ is the so-called Fraenkel asymmetry of the domain $\Omega$ (see~\eqref{fra} for the definition). 
Quantitative versions of the type \eqref{quantfk} have also been established for other isoperimetric inequalities involving eigenvalues of the Laplace operator, see, e.g., \cite{brascosteklov, brasco2015, brascopratelli}.

Fewer isoperimetric inequalities have been established for eigenvalues of the biharmonic operator, namely for the first nontrivial eigenvalue of the Dirichlet (``clamped plate'') problem \cite{ashbaugh,nadirashvili}, of the Neumann (``free plate'') problem \cite{chasmanpreprint,chasman}, and of the Steklov problem introduced in \cite{buosoprovenzano} (see also \cite{buosoprovenzano0}). An isoperimetric inequality is still missing for another Steklov problem introduced in \cite{kuttler68}, the conjectured optimizer being the regular pentagon (see, e.g., \cite{antunesgazzola, bucurgazzola11} and the references therein). 

Among these inequalities, the first one that has been given in quantitative form is the inequality for Steklov problem in \cite{buosoprovenzano}, namely
\begin{equation}
\lambda_2(\Omega)\le\lambda_2(\Omega^*)(1-C\mathcal{A}(\Omega)^2),
\label{quantitative_bp}
\end{equation}
where $\lambda_2(\Omega)$ is the first nontrivial eigenvalue of the biharmonic Steklov problem
\begin{equation}\label{SteklovPDE}
 \begin{cases}\Delta^2u-\tau\Delta u=0 &\text{in $\Omega$,}\\
  \frac{\partial^2 u}{\partial \nu^2}= 0 &\text{on $\partial\Omega$,}\\
		\tau\frac{\partial u}{\partial \nu}
-{\rm div}_{\partial\Omega}\Big(D^2u\cdot \nu\Big)-\frac{\partial\Delta u}{\partial \nu} = \lambda u &\text{on $\partial\Omega$,}
 \end{cases}
\end{equation}
where $\tau$ is a strictly positive constant.

In this paper we provide a quantitative form for the isoperimetric inequality for the first non-trivial eigenvalue of the following biharmonic Neumann problem:
\begin{equation}\label{NeumannPDE}
 \begin{cases}\Delta^2u-\tau\Delta u=\lambda u &\text{in $\Omega$,}\\
  \frac{\partial^2 u}{\partial \nu^2}= 0 &\text{on $\partial\Omega$,}\\
		\tau\frac{\partial u}{\partial \nu}
-{\rm div}_{\partial\Omega}\Big(D^2u\cdot \nu\Big)-\frac{\partial\Delta u}{\partial \nu} = 0 &\text{on $\partial\Omega$.}
 \end{cases}
\end{equation}
We recall that for $N=2$, problem \eqref{NeumannPDE} describes the transverse vibrations of an unconstrained thin elastic plate with shape $\Omega\subset \mathbb{R}^2$ when at rest. The constant $\tau$ represents the ratio of lateral tension to lateral rigidity and is taken to be non-negative.  

When $\tau>0$ and $\Omega\subset\mathbb{R}^N$ is a smooth connected bounded open set, it is known that the spectrum of the Neumann biharmonic operator $\Delta^2-\tau\Delta$ consists entirely of non-negative eigenvalues of finite multiplicity, repeated according to their multiplicity:
\[
0=\lambda_1(\Omega)<\lambda_2(\Omega)\leq\cdots\leq\lambda_j(\Omega)\leq\cdots.
\]
Note that since constant functions satisfy problem \eqref{NeumannPDE} with eigenvalue $\lambda=0$, the first positive eigenvalue is $\lambda_2$, which is usually called the ``fundamental tone'' of the plate. In \cite{chasman}, the author proved that
\begin{equation}\label{iso_neumann}
 \lambda_2(\Omega)\leq \lambda_2(\Omega^*)
\end{equation}
with equality if and only if $\Omega=\Omega^*$. The proof of inequality \eqref{iso_neumann} is based on Weinberger's argument for the Neumann Laplacian, taking suitable extensions of the eigenfunctions of the ball as trial functions (see \cite{weinberger}). In \cite{brascopratelli}, the authors carry out a more careful analysis of such an argument, improving Weinberger's inequality to a quantitative form. In a similar way, we start from the proof of \eqref{iso_neumann} and improve the result to the quantitative inequality~\eqref{quantitative_neumann} by means of this finer analysis. 

The question of sharpness is another important issue that has to be addressed when dealing with quantitative isoperimetric inequalities. More precisely, given an inequality of the form
\begin{equation*}%\label{quantitative_general}
\lambda_2(\Omega)\leq\lambda_2(\Omega^*)\left(1-\Phi({\rm dist}(\Omega,\mathcal B))\right),
\end{equation*} 
where $\Phi$ is some modulus of continuity, ${\rm dist}(\cdot,\cdot)$ is a suitable distance between open sets and $\mathcal B$ is the family of all balls in $\mathbb R^N$, we say that %inequality \eqref{quantitative_general} 
it is sharp if there exists a family $\lbrace\Omega_{\varepsilon}\rbrace_{\varepsilon\in(0,\varepsilon_0)}$ such that ${\rm dist}(\Omega_{\varepsilon},\mathcal B)\rightarrow 0$, $\lambda_2(\Omega_{\varepsilon})\rightarrow\lambda_2(\Omega^*)$ as $\varepsilon\rightarrow 0$, and there exists contants $c_1,c_2>0$ which do not depend on $\varepsilon>0$ and $\Omega^*$ such that
{\begin{equation*}
c_1\Phi({\rm dist}(\Omega_{\varepsilon},\mathcal B))\leq 1-\frac{\lambda_2(\Omega_{\varepsilon})}{\lambda_2(\Omega^*)}\leq c_2\Phi({\rm dist}(\Omega_{\varepsilon},\mathcal B)),
\end{equation*}}
as $\varepsilon\rightarrow 0$. Note that, in our case, the distance function is given by the Fraenkel asymmetry ${\rm dist}(\Omega,\mathcal B)=\mathcal A(\Omega)$ while the modulus of continuity is $\Phi(t)=Kt^2$, for some $K>0$. By means of the construction introduced in \cite{brascosteklov,brascopratelli}, we prove in Section~\ref{sharpness_neumann} that the quantitative Neumann inequality \eqref{quantitative_neumann} is sharp. 

It is worth noting that in the Neumann Laplacian case in \cite{brascopratelli}, the authors try, as a first guess, to consider ellipsoids as the family $\lbrace\Omega_{\varepsilon}\rbrace_{\varepsilon\in(0,\varepsilon_0)}$, with the ball $\Omega_0$ being the maximizer. Unfortunately, this is not a good family to prove sharpness; this can be explained observing that different directions of perturbation behave in a different way with respect to the fundamental tone. In particular, some directions are not ``good enough'' to see the sharpness (cf.\ \cite[Remark 5.2]{brascopratelli}). This phenomenon can be observed in our case as well: therefore we need to restrict our analysis by excluding some directions. See (\ref{perturbation}) and Remark \ref{directions}. 

The Steklov problem \eqref{SteklovPDE} is of particular interest despite its recent introduction, since in \cite{buosoprovenzano} the authors show that it has a very strict relationship with the Neumann problem \eqref{NeumannPDE}. Using a mass perturbation argument, they prove that the Steklov problem can in fact be viewed as a limiting Neumann problem where the mass is distributed only on the boundary. Note that this construction was already performed in \cite{lambertiprozisaac} for the Laplace operator, obtaining similar results (see also \cite{dallarivaproz,lambertiproz} for the computation of the topological derivative). Moreover, this justifies the fact of thinking of Steklov problems in terms of vibrating objects (plates or membranes) where the mass lies only on the boundary (see \cite{steklov}). The authors also prove the quantitative inequality \eqref{quantitative_bp} by adapting an argument due to Brock (see \cite{brock}) for the Steklov Laplacian to the biharmonic case in the refined version of \cite{brascosteklov}. However, they do not discuss its sharpness. The similarity of the variational characterization of Neumann and Steklov eigenvalues allows us to prove that inequality \eqref{quantitative_bp} is sharp by an easy adaptation of the arguments used in the Neumann case.

The paper is organized as follows. In Section~\ref{preliminaries}, we give some preliminary results and introduce the notation. Section~\ref{proof_neumann_quantitative} is devoted to the Neumann quantitative isoperimetric inequality \eqref{quantitative_neumann}, the sharpness of which we prove in Section~\ref{sharpness_neumann}. Finally, in Section~\ref{sharpness_steklov} we prove that the Steklov inequality \eqref{quantitative_bp} is sharp.

\section{Preliminaries and notation}\label{preliminaries}
We introduce here the notation used throughout the paper and recall some fundamental results proved in \cite{chasman}.

Let $B$ be the unit ball in $\mathbb{R}^N$ centered at the origin and $\omega_N$ be the Lebesgue measure $|B|$ of $B$.  

We denote by $j_1$ and $i_1$ the ultraspherical and modified ultraspherical Bessel functions of the first kind and order $1$ respectively. They can be expressed in terms of standard Bessel and modified Bessel functions of the first kind $J_{\nu}, I_{\nu}$ as follows:
\[
j_1(z)=z^{1-N/2}J_{N/2}(z),\qquad i_1(z)=z^{1-N/2}I_{N/2}(z).
\]
For more information on Bessel and modified Bessel functions, see, e.g., \cite[\S 9]{abram}.

We will define trial functions in terms of the eigenfunctions corresponding to $\lambda_2(B)$ of the Neumann problem. For a fixed $\tau>0$, we take positive constants $a,b$ satisfying $a^2b^2=\lambda_2(B)$ and $b^2-a^2=\tau$. We set
\[
R(r)=j_1(ar)+\gamma i_1(br),\qquad\text{where}\qquad \gamma=-\frac{a^2 j_1''(a)}{b^2 i_1''(b)}.
\]
We then define the function $\rho:[0,+\infty)\to[0,+\infty)$ as
\begin{equation*}%\label{rho}
\rho(r)=\begin{cases}
R(r),&r\in[0,1)\\
R(1)+(r-1)R'(1),&r\in[1,+\infty).
\end{cases}
\end{equation*}

Let $u_k:\mathbb{R}^N\to\mathbb{R}$ be defined by
\begin{equation}\label{uk}
u_k(x):=\rho(|x|)\frac{x_k}{|x|},
\end{equation}
for $k=1,\dots,N$. The functions ${u_k}_{|_{B}}$ are in fact the eigenfunctions associated with the eigenvalue $\lambda_2(B)$ of the Neumann problem \eqref{NeumannPDE} on the unit ball $B$. Recall that $\lambda_2(B)$ has multiplicity $N$ (see \cite[Theorem 3]{chasman11}). Moreover, we have (see \cite[p.\ 437]{chasman})
\begin{align*}
%\label{relation_1}
\sum_{k=1}^N |u_k|^2&=\rho(|x|)^2,\\
%\label{relation_2}
\sum_{k=1}^N|D u_k|^2&=\frac{N-1}{|x|^2}\rho(|x|)^2+(\rho'(|x|))^2,\\
%\label{relation_3}
\sum_{k=1}^N|D^2 u_k|^2&=(\rho''(|x|))^2+\frac{3(N-1)}{|x|^4}(\rho(|x|)-|x|\rho'(|x|))^2.
\end{align*}
We denote by $N[\rho]$ the quantity
\[
N[\rho]:=\sum_{k=1}^N  |D^2u_k|^2+\tau|D u_k|^2.
\]

We recall some properties enjoyed by the functions $\rho$ and $N[\rho]$ which were proved in \cite{chasman}.
\begin{lem}\label{pro}
The function $\rho$ satisfies the following properties.
\begin{enumerate}[i)]
\item $\rho''(r)\leq 0$ for all $r\geq 0$, therefore $\rho'$ is non-increasing.
\item $\rho(r)-r\rho'(r)\geq 0$, equality holding only for $r=0$.
\item The function $\rho(r)^2$ is strictly increasing.
\item The function ${\rho(r)^2}/{r^2}$ is decreasing.
\item The function ${3(\rho(r)-r\rho'(r))^2}/{r^4}+\tau{\rho^2(r)}/{r^2}$ is decreasing.
\item $N[\rho(r_1)]>N[\rho(r_2)]$ for any $r_1\in [0,1)$, $r_2\in [1,+\infty)$.
\item For all $r\geq 0$ we have
\[
 N[\rho(r)]=(\rho''(r))^2+\frac{3(N-1)(\rho(r)-r\rho'(r))^2}{r^4}+\tau(N-1)\frac{\rho^2(r)}{r^2}+\tau(\rho'(r))^2.
\]
\item For all $r\geq 1$, $N[\rho(r)]$ is decreasing.
\end{enumerate}
\end{lem}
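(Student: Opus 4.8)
The plan is to verify each of the eight items by reducing everything to the explicit structure of $\rho$, namely that on $[0,1)$ it equals $R(r)=j_1(ar)+\gamma i_1(br)$, a linear combination of ultraspherical Bessel functions solving a fourth-order ODE, and that on $[1,+\infty)$ it is the linear extension $R(1)+(r-1)R'(1)$. Before starting I would record the basic ODE facts: $j_1$ and $i_1$ satisfy $z^2 f'' + (N-1)z f' - (N-1)f = \pm z^2 f$ (sign $+$ for $i_1$, $-$ for $j_1$), equivalently $(\Delta + a^2)(j_1(a|x|)x_k/|x|)=0$ and $(\Delta - b^2)(i_1(b|x|)x_k/|x|)=0$; and the positivity facts $j_1, j_1', i_1, i_1' > 0$ on $(0,1]$ together with $j_1'' < 0 < i_1''$ there, so that the constant $\gamma$ is positive. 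I would also note $\rho(0)=0$, $\rho$ is $C^2$ across $r=1$ by construction (the second derivative is $0$ from the right and one checks $R''(1)=0$ is \emph{not} needed — rather one uses that the relevant one-sided limits of the expressions appearing in $N[\rho]$ match up, which is exactly why the boundary condition $\partial^2 u/\partial\nu^2=0$, i.e. $R''(1)=0$, was imposed: that is precisely $\gamma=-a^2 j_1''(a)/(b^2 i_1''(b))$).

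Next I would dispatch the items roughly in the order they are stated, since several feed into later ones. For (i), on $[0,1)$ write $\rho'' = a^2 j_1''(ar) + \gamma b^2 i_1''(br)$ and observe this vanishes at $r=1$ by the definition of $\gamma$; then show $\rho''$ has constant sign on $[0,1]$, using monotonicity of $j_1''$ and $i_1''$ (or a direct power-series / Bessel-recurrence argument), concluding $\rho'' \le 0$; on $[1,\infty)$, $\rho''\equiv 0$. Item (iii), $\rho^2$ strictly increasing, follows once we know $\rho \ge 0$ and $\rho' \ge 0$: $\rho'$ is non-increasing by (i), $\rho'(1)=R'(1)\ge 0$, hence $\rho' \ge \rho'(1) \ge 0$ on $[0,1]$ and $\rho'=R'(1)\ge 0$ beyond, and $\rho(0)=0$ forces $\rho\ge 0$, with strict monotonicity because $\rho'>0$ except possibly at isolated points. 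Item (ii), $\rho(r)-r\rho'(r)\ge 0$ with equality only at $0$: the derivative of $\rho-r\rho'$ is $-r\rho''\ge 0$ by (i), and it vanishes at $r=0$, so it is non-negative and strictly positive for $r>0$ as soon as $\rho''\not\equiv 0$ near $0$ — on $[1,\infty)$ it equals the positive constant $\rho(1)-\rho'(1)$ (positive by the $r<1$ analysis). Items (iv) and (v) are monotonicity statements for $\rho^2/r^2$ and for $3(\rho-r\rho')^2/r^4+\tau\rho^2/r^2$: differentiate, clear denominators, and reduce to sign conditions already available — for (iv), $(\rho^2/r^2)' = 2\rho(r\rho'-\rho)/r^3 \le 0$ by (ii); for (v) the computation is longer but the $r\rho'-\rho$ factor and $\rho''\le0$ again carry it, possibly also invoking the ODE to rewrite a stubborn term.

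For the items involving $N[\rho]$, item (vii) is the identity obtained by substituting the formula for $\sum|D^2u_k|^2$ and $\sum|Du_k|^2$ from the displayed computations into the definition $N[\rho]=\sum|D^2u_k|^2+\tau\sum|Du_k|^2$ — purely algebraic, no inequalities, just bookkeeping of the $\tau$ terms. Item (viii), $N[\rho]$ decreasing on $[1,\infty)$: here $\rho''\equiv 0$, so $N[\rho(r)] = 3(N-1)(\rho-r\rho')^2/r^4 + \tau(N-1)\rho^2/r^2 + \tau(\rho')^2$; the last term is a constant, the first two together are $(N-1)$ times the function in (v), hence decreasing — so (viii) is an immediate corollary of (v) and (vii). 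Item (vi), $N[\rho(r_1)] > N[\rho(r_2)]$ for $r_1\in[0,1)$, $r_2\in[1,\infty)$, is the one I expect to be the main obstacle: it is a genuinely two-regime comparison rather than a one-variable monotonicity, so I would bound $\inf_{[0,1)} N[\rho]$ from below and $\sup_{[1,\infty)} N[\rho]$ from above and show the former exceeds the latter. By (viii) the supremum over $[1,\infty)$ is attained at $r=1$ (value $N[\rho(1)]$), and by continuity and (vii) one wants $N[\rho(r)] > N[\rho(1)]$ for all $r<1$; the difficulty is that on $[0,1)$ the term $(\rho'')^2$ is present and non-monotone in general, so I would instead show $N[\rho]$ itself is decreasing on all of $[0,\infty)$ (not just $[1,\infty)$), which combined with continuity at $1$ gives (vi) with the strict inequality coming from $\rho''\not\equiv 0$ on any interval inside $(0,1)$. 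Proving $N[\rho]' \le 0$ on $(0,1)$ is the crux: differentiate the formula in (vii), use the fourth-order ODE satisfied by $R$ to eliminate $\rho'''$ and $\rho''''$ in favor of lower-order terms, and then assemble the sign using $\rho''\le0$, $\rho-r\rho'\ge0$, $\rho,\rho'\ge0$ and items (iv)–(v); this is the step where I would expect to actually need Bessel-function positivity estimates rather than soft arguments, and where the careful choice of $a,b$ with $a^2b^2=\lambda_2(B)$, $b^2-a^2=\tau$ makes the algebra close.
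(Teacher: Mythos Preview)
The paper does not prove this lemma at all; it is stated explicitly as a recollection of results ``which were proved in \cite{chasman}'', so there is no in-paper argument to compare against directly. Your overall architecture---derive (ii) from (i) by differentiating $\rho-r\rho'$, (iv) from (ii), (vii) as pure algebra from the displayed formulas for $\sum|D^2u_k|^2$ and $\sum|Du_k|^2$, and (viii) from (v) and (vii) using $\rho''\equiv0$ on $[1,\infty)$---is correct and matches how these properties are established in the cited reference.

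Two places are genuinely incomplete. For (i), knowing that $\rho''(1)=0$ and that $j_1''$, $i_1''$ are each monotone does \emph{not} force the linear combination $a^2 j_1''(ar)+\gamma b^2 i_1''(br)$ to have constant sign on $[0,1]$; a sum of two monotone functions vanishing at an endpoint can certainly change sign in the interior. In \cite{chasman} this step is handled via explicit Bessel recurrences and power series, not the soft argument you outline, so your ``or a direct power-series / Bessel-recurrence argument'' parenthetical is doing all the work.

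For (vi), your plan is to upgrade to full monotonicity of $N[\rho]$ on $(0,1)$. Differentiating (vii) there and writing $g=\rho-r\rho'$ gives
\[
N[\rho]'=2\rho''\rho''' + 3(N-1)\Bigl(-\frac{2g\rho''}{r^3}-\frac{4g^2}{r^5}\Bigr)+\tau(N-1)\,\frac{2\rho(r\rho'-\rho)}{r^3}+2\tau\rho'\rho'',
\]
and the term $-2g\rho''/r^3$ is \emph{non-negative} (since $g\ge0$, $\rho''\le0$), while $2\rho''\rho'''$ has no sign from (i)--(v) alone. So the sign of $N[\rho]'$ is genuinely undetermined by the soft inputs, and your closing remark---``use the fourth-order ODE to eliminate $\rho'''$ and $\rho''''$\ldots where the careful choice of $a,b$ makes the algebra close''---is exactly the content of the lemma, left as a hope rather than an argument. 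This is the real gap: either carry out the ODE substitution and verify the sign explicitly, or follow the direct two-regime comparison in \cite{chasman}, which does not proceed via global monotonicity of $N[\rho]$ on $(0,1)$.
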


To conclude this section, let us recall the definition of the Fraenkel asymmetry $\mathcal A(\Omega)$ of a set $\Omega\subset\mathbb{R}^N$:
\begin{equation}\label{fra}
\mathcal A(\Omega):=\inf\left\{\frac{|\Omega\triangle {B}|}{|\Omega|}: {B}\text{\ is a ball such that}\ |{B}|=|\Omega|\right\}.
\end{equation}

%We also recall some notations and useful facts of tangential calculus. We refer to \cite{shapes} for details. Let $\Omega$ be a smooth domain in $\mathbb{R}^N$. We denote by $\nabla_{\partial\Omega}u$ the tangential gradient of $u$ and by $D_{\partial\Omega}f$ the tangential Jacobian matrix of $f$. We denote by $D^2_{\partial\Omega}$ the tangential Hessian of $u$ which is defined by
%$$
%D^2_{\partial\Omega}u:=D_{\partial\Omega}\left(\nabla_{\partial\Omega}\right).
%$$
%The following formula holds
%$$
%D^2u_{|_{\partial\Omega}}=D^2_{\partial\Omega}u-\left(D^2b_{\Omega}\cdot\nabla_{\partial \Omega}u\right)\cdot\nu^T,
%$$
%where $b_{\Omega}$ is the signed distance $b$ from $\partial\Omega$ and $\nu$ is the outer unit normal to $\partial\Omega$. The operator $D^2_b(x):T_{\partial\Omega}(x)\rightarrow T_{\partial\Omega}(x)$ is the so-called shape operator acting on the tangent space $T_{\partial\Omega}(x)$ to $\partial\Omega$ at a point $x\in\partial\Omega$.

\section{Quantitative isoperimetric inequality for the Neumann problem}\label{proof_neumann_quantitative}

In this section we state and prove the {quantitative isoperimetric inequality for the fundamental tone of the Neumann problem \eqref{NeumannPDE}}.

\begin{thm}\label{NeumannQI}
For every bounded domain $\Omega$ in $\mathbb{R}^N$ of class $C^1$ the following estimate holds
\begin{equation}\label{quantitative_neumann}
\lambda_2(\Omega)\leq\lambda_2(\Omega^*)\left(1-\eta_{N,\tau,|\Omega|}\mathcal A(\Omega)^2\right),
\end{equation}
where $\eta_{N,\tau,|\Omega|}>0$, $\Omega^*$ is a ball such that $|\Omega^*|=|\Omega|$, and $\lambda_2(\Omega)$, $\lambda_2(\Omega^*)$ are the first positive eigenvalues of problem \eqref{NeumannPDE} on $\Omega$, $\Omega^*$ respectively.
\end{thm}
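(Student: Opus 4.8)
The plan is to follow the Brasco--Pratelli refinement of Weinberger's argument, adapted to the fourth-order operator. First I would normalize: by scaling we may fix $|\Omega|=\omega_N$, so that $\Omega^*=B$, and let $\delta=\mathcal{A}(\Omega)$. If $\delta$ is bounded away from $0$ (say $\delta\geq\delta_0$ for a threshold to be chosen), then inequality \eqref{iso_neumann} together with a crude lower bound on $1-\lambda_2(\Omega)/\lambda_2(B)$ already suffices, so the real work is the regime $\delta\to0$. In that regime one first uses a continuity/compactness argument (or a concentration-compactness / selection principle in the spirit of Cicalese--Leonardi, though Brasco--Pratelli avoid this and work directly) to reduce to domains that are $L^1$-close to $B$ after a suitable translation, so that the trial functions built from the $u_k$ of Section~\ref{preliminaries} are admissible and well-behaved.

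The core step is the construction of trial functions. As in \cite{weinberger} and \cite{chasman}, for the extended eigenfunctions $u_k(x)=\rho(|x|)x_k/|x|$ one wants to choose the center of the ball so that $\int_\Omega u_k\,dx=0$ for each $k=1,\dots,N$; this is the standard topological (Brouwer) argument ensuring the $u_k$ are valid test functions in the variational characterization
\[
\lambda_2(\Omega)=\min\left\{\frac{\int_\Omega |D^2 u|^2+\tau|Du|^2\,dx}{\int_\Omega u^2\,dx}:\ u\in H^2(\Omega),\ \int_\Omega u\,dx=0\right\}.
\]
Summing over $k$ gives
\[
\lambda_2(\Omega)\sum_{k=1}^N\int_\Omega u_k^2\,dx\leq\sum_{k=1}^N\int_\Omega |D^2u_k|^2+\tau|Du_k|^2\,dx=\int_\Omega N[\rho(|x|)]\,dx,
\]
and using $\sum u_k^2=\rho^2$ together with the monotonicity properties in Lemma~\ref{pro} (parts iii) and vi)--viii)) one obtains
\[
\lambda_2(\Omega)\int_\Omega \rho(|x|)^2\,dx\leq\int_\Omega N[\rho(|x|)]\,dx.
\]
On the ball $B$ this holds with equality and $\lambda_2(B)=\lambda_2(\Omega^*)$. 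The quantitative gain comes from estimating, from below, the deficit
\[
D(\Omega):=\int_\Omega \big(N[\rho(|x|)]-\lambda_2(B)\rho(|x|)^2\big)\,dx.
\]
Since the integrand vanishes at $|x|=1$ and, by Lemma~\ref{pro}, $N[\rho]-\lambda_2(B)\rho^2$ is positive inside $B$ and negative outside (this is exactly what parts vi)--viii) and the monotonicity of $\rho^2$ encode), one splits $\Omega$ into $\Omega\cap B$ and $\Omega\setminus B$ and bounds the contribution of the symmetric difference $\Omega\triangle B$ from below by a constant times $|\Omega\triangle B|$, i.e.\ by a constant times $\mathcal{A}(\Omega)$ — but this is only linear in the asymmetry. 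To recover the correct quadratic power one must, following \cite{brascopratelli}, also control from above the denominator perturbation $\int_\Omega\rho^2\,dx-\int_B\rho^2\,dx$ and combine the two; the linear terms cancel to leading order (again by the criticality of $B$), and what survives is quadratic in $\mathcal{A}(\Omega)$. Concretely one writes $\lambda_2(\Omega)\leq\lambda_2(B)\big(1-D(\Omega)/\int_\Omega N[\rho]\big)$ and then shows $D(\Omega)\geq c\,\mathcal{A}(\Omega)^2$ using the fine estimates on $\rho$ near $r=1$ (Taylor expansion of $N[\rho]-\lambda_2(B)\rho^2$ about $r=1$, whose linear coefficient is what makes the first-order contribution a pure boundary term that is then handled via the isoperimetric-type bound $|\Omega\triangle B|\gtrsim\mathcal{A}(\Omega)$ and a second-order correction).

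I expect the main obstacle to be precisely this last point: upgrading the naive linear-in-$\mathcal{A}(\Omega)$ estimate to the sharp quadratic one. In the second-order Weinberger setting \cite{brascopratelli} this requires a careful two-sided expansion in which the ``bad'' first-order terms from the numerator and denominator are shown to cancel, leaving a genuinely quadratic remainder; transporting this to our operator means one must verify that $N[\rho]$ has the analogous structure near $r=1$ — in particular that $\frac{d}{dr}\big(N[\rho]-\lambda_2(B)\rho^2\big)\big|_{r=1}$ produces a term proportional to a boundary integral whose linear part is killed by the volume constraint $|\Omega|=|B|$, exactly as the radial derivative of $\rho^2-1$ is at $r=1$ in the classical case. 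The Bessel-function identities and the explicit form of $\gamma$, $a$, $b$, together with all eight items of Lemma~\ref{pro}, should make this mechanical but delicate; a secondary technical point is ensuring the Brouwer-type centering argument still applies for merely $C^1$ domains and that all the appearing constants depend only on $N$, $\tau$, and $|\Omega|$ (hence, after rescaling, only on $N$ and $\tau$), so that $\eta_{N,\tau,|\Omega|}>0$ as claimed.
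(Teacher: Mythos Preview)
Your broad strategy --- Weinberger-type trial functions $u_k$, Brouwer centering, summing Rayleigh quotients, then extracting a quadratic deficit \`a la Brasco--Pratelli --- is exactly the paper's. However, the mechanism you describe for the quadratic gain is not the one that works, and this is a genuine gap.

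First, two minor points. The case split on $\mathcal A(\Omega)$ and any compactness or selection-principle argument are unnecessary: the paper's estimate is completely direct and valid for every $\alpha:=|\Omega\triangle B|/|\Omega|\in[0,2]$. Second, your deficit formula is garbled: from $\lambda_2(\Omega)\int_\Omega\rho^2\le\int_\Omega N[\rho]$ one gets $\lambda_2(\Omega)\le\lambda_2(B)+D(\Omega)/\int_\Omega\rho^2$, and since in fact $D(\Omega)\le 0$ the goal is $-D(\Omega)\ge c\,\alpha^2$; your displayed inequality $\lambda_2(\Omega)\le\lambda_2(B)(1-D(\Omega)/\int_\Omega N[\rho])$ is \emph{weaker} than the plain isoperimetric inequality when $D(\Omega)\le 0$.

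The substantive problem is your proposed route to the quadratic bound. You assert that the integrand $g(r):=N[\rho(r)]-\lambda_2(B)\rho(r)^2$ vanishes at $r=1$ and is positive inside $B$, negative outside, so that a Taylor expansion of $g$ at $r=1$ yields the $\alpha^2$ term. Neither claim is justified: only $\int_B g\,dx=0$ holds, not $g(1)=0$, and since $g(0)>0$ the integral constraint forces $g$ to be negative somewhere in $(0,1)$. The paper does \emph{not} expand $g$; it discards the denominator perturbation entirely (it has the favourable sign, $\int_\Omega\rho^2\ge\int_B\rho^2$) and works only with the numerator difference $\int_B N[\rho]-\int_\Omega N[\rho]$. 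The key device you are missing is an \emph{annular rearrangement}: one introduces concentric balls $B_1\subset B\subset B_2$ with $|B_1|=|\Omega\cap B|$, $|B_2\setminus B|=|\Omega\setminus B|$, so that $1-r_1^N=r_2^N-1=\alpha/2$, and uses the monotonicity in Lemma~\ref{pro} to get
\[
\int_B N[\rho]-\int_\Omega N[\rho]\ \ge\ \int_{B\setminus B_1}N[\rho]-\int_{B_2\setminus B}N[\rho].
\]
Each annular integral is then estimated explicitly; the terms linear in $\alpha$ turn out to cancel \emph{exactly} (this is the ``criticality'' you allude to, but it occurs between the two annuli, not between numerator and denominator), and the surviving quadratic term comes from the concavity of $t\mapsto(1+t)^{(N-1)/N}$ via the second-order expansion
\[
r_2^{N-1}=(1+\alpha/2)^{(N-1)/N}\ \le\ 1+\tfrac{N-1}{N}\tfrac{\alpha}{2}-c_N\alpha^2.
\]
This is the step that actually produces the $\mathcal A(\Omega)^2$.
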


\begin{proof}
Let $\Omega$ be a bounded domain in $\mathbb{R}^N$ of class $C^1$ with the same measure as the unit ball $B$. We recall the variational characterization of the second eigenvalue $\lambda_2(\Omega)$ of \eqref{NeumannPDE} on $\Omega$:
\begin{equation}\label{ray}
\lambda_2(\Omega)=\inf_{\substack{0\ne u\in H^2(\Omega)\\ \int_{\Omega}u dx=0}}\frac{\int_{\Omega}|D^2u|^2+\tau|Du|^2dx}{\int_{\Omega}u^2dx}.
\end{equation}
Let $u_k(x)$, for $k=1,\dots,N$, be the eigenfunctions corresponding to $\lambda_2(B)$ defined in \eqref{uk}. Clearly ${u_k}_{|_{\Omega}}\in H^2(\Omega)$ by construction. It is possible to choose the origin of the coordinate axes in $\mathbb{R}^N$ in such a way that $\int_{\Omega}u_k dx=0$ for all $k=1,\dots,N$. With this choice, the functions $u_k$ are suitable trial functions for the Rayleigh quotient \eqref{ray}. Once we have fixed the origin, let
$$
\alpha:=\frac{|\Omega\triangle B|}{|\Omega|}.
$$
By definition of Fraenkel asymmetry, we have
\begin{equation}\label{A}
\mathcal A(\Omega)\leq\alpha\leq 2. 
\end{equation}
From the variational characterization \eqref{ray}, it follows that for each $k=1,\dots,N$,
\[
\lambda_2(\Omega)\leq\frac{\int_{\Omega}|D^2u_k|^2+\tau|D u_k|^2dx}{\int_{\Omega}u_k^2 dx}.
\]
We multiply both sides by $\int_{\Omega}u_k^2 dx$ and sum over $k=1,\dots,N$, obtaining
\begin{equation}\label{n1}
\lambda_2(\Omega)\leq\frac{\int_{\Omega}N[\rho]dx}{\int_{\Omega}\rho^2 dx}.
\end{equation}
The same procedure for $\lambda_2(B)$ clearly yields
\begin{equation}\label{b1}
\lambda_2(B)=\frac{\int_{B}N[\rho]dx}{\int_{B}\rho^2 dx}.
\end{equation}
From \eqref{n1} and \eqref{b1}, it follows that
\begin{equation}\label{ineq1}
\lambda_2(B)\int_B \rho^2 dx-\lambda_2(\Omega)\int_{\Omega}\rho^2 dx\geq\int_B N[\rho]dx-\int_{\Omega}N[\rho]dx\geq 0,
\end{equation}
where the last inequality follows from Lemma~\ref{pro}, {\it iv)} and \cite[Lemma 14]{chasman}. 

Now we consider the two balls $B_1$ and $B_2$ centered at the origin with radii $r_1,r_2$ taken such that $|\Omega\cap B|=|B_1|=\omega_N r_1^N$ and $|\Omega\setminus B|=|B_2\setminus B|=\omega_N(r_2^N-1)$. Then $|B_2|=\omega_N r_2^N$, and by construction
\begin{equation}\label{aster}
1-r_1^N=\frac{\alpha}{2}=r_2^N-1.
\end{equation}
This is due to the fact that $|\Omega|+|B|=|\Omega\triangle B|+2 |\Omega\cap B|$, and then $1-r_1^N=\alpha/2$. Similarly, $|\Omega\setminus B|+|\Omega\cap B|=|\Omega|$, hence $r_1^N=2-r_2^N$, and then $r_2^N-1=\alpha/2$.

Now we observe, again by Lemma~\ref{pro}, {\it vi)} and {\it viii)}, that
\[
\int_{\Omega}N[\rho]dx\leq\int_{B_1}N[\rho]dx+\int_{B_2\setminus B}N[\rho]dx.
\]
From this and \eqref{ineq1}, we obtain
\begin{align}\label{ineq2}
\lambda_2(B)\int_B\rho^2 dx-\lambda_2(\Omega)\int_{\Omega}\rho^2 dx&\geq\int_B N[\rho]dx-\int_{\Omega}N[\rho]dx\\
&\geq\int_{B\setminus B_1}N[\rho]dx-\int_{B_2\setminus B}N[\rho]dx.\nonumber
\end{align}
Since the function $\rho(r)^2$ is strictly increasing by Lemma~\ref{pro}, {\it iii)}, we have
\begin{equation*}%\label{den}
\int_{\Omega}\rho^2 dx\geq\int_B\rho^2 dx=N\omega_N\int_0^1\rho^2(r)r^{N-1}dr=:C^{(1)}_{N,\tau},
\end{equation*}
hence,
\begin{align}\label{passoA}
\lambda_2(B)&\int_B\rho^2 dx-\lambda_2(\Omega)\int_{\Omega}\rho^2 dx\\
&\leq\left(\lambda_2(B)-\lambda_2(\Omega)\right)\int_B\rho^2 dx +\lambda_2(\Omega)\left(\int_B\rho^2 dx-\int_{\Omega}\rho^2 dx\right)\nonumber\\
&\leq C^{(1)}_{N,\tau}\left(\lambda_2(B)-\lambda_2(\Omega)\right).\nonumber
\end{align}

Now we consider the right-hand side of \eqref{ineq2}. We write $N[\rho]$ more explicitly in terms of $\rho$, obtaining:
\begin{align}\label{passoB1}
\int_{B\setminus B_1}&N[\rho]dx=N\omega_N\int_{r_1}^1\Big((\rho''(r))^2+\frac{3(N-1)(\rho(r)-r\rho'(r))^2}{r^4}\\
&\qquad\qquad\qquad\qquad+\tau(\rho'(r))^2+\frac{\tau (N-1)}{r^2}\rho(r)^2\Big)r^{N-1}dr\nonumber\\
&\geq N\omega_N\int_{r_1}^1\left(\frac{3(N-1)(\rho(r)-r\rho'(r))^2}{r^4}+\tau(\rho'(r))^2+\frac{\tau (N-1)}{r^2}\rho(r)^2\right)r^{N-1}dr\nonumber\\
&\geq \omega_N\left(3(N-1)(R(1)-R'(1))^2+\tau R'(1)^2+\tau (N-1)R(1)^2\right)(1-r_1^N),\nonumber
\end{align}
where in the last inequality, we used the fact that $N[\rho]-(\rho'')^2$ is non-increasing in $r$ (see Lemma~\ref{pro}, {\it i)} and {\it v)}).
Moreover,
%\small
\begin{align}\label{passoB2}
\int_{B_2\setminus B}&N[\rho]dx\\
&=N\omega_N\int_1^{r_2}\left(\frac{3(N-1)}{r^4}(R(1)-R'(1))^2+\tau R'(1)^2\right.\nonumber\\
                &\qquad\qquad+\frac{\tau(N-1)}{r^2}\Big((R(1)-R'(1))^2+2rR'(1)(R(1)-R'(1))\Big)\nonumber\\
                 &\qquad\qquad\left.+\frac{\tau(N-1)}{r^2}\Big(r^2R'(1)^2\Big)\right)r^{N-1}dr\nonumber\\
&\leq N\omega_N\int_1^{r_2}\left(N\tau R'(1)^2+\frac{N-1}{r}\left((3+\tau)(R(1)-R'(1))^2\right.\right.\nonumber\\
               &\qquad\qquad\left.+2\tau R'(1)(R(1)-R'(1))\right)\Big)r^{N-1}dr\nonumber\\
&=N\omega_N\tau R'(1)^2(r_2^N-1)+N\omega_N\left((3+\tau)(R(1)-R'(1))^2\right.\nonumber\\
                 &\qquad\left.+2\tau R'(1)(R(1)-R'(1))\right)(r_2^{N-1}-1),\nonumber
\end{align}
\normalsize
where we have estimated the quantities ${1}/{r^2}$ and ${1}/{r^4}$ by ${1}/{r}$. We note that $r_2=\left(1+{\alpha}/{2}\right)^{{1}/{N}}$ and $0\leq\alpha\leq 2$. Using the Taylor expansion up to order $1$ and remainder in Lagrange form, we obtain
%\begin{multline}\label{4n}
%r_2^{N-4}=1+\frac{N-4}{N}\frac{\alpha}{2}-\frac{(N-4)\left(1+\frac{\xi_4}{2}\right)^{\frac{N-4}{N}-2}}{2N^2}\alpha^2\\
%\leq 1+\frac{N-4}{N}\frac{\alpha}{2}-\frac{(N-4)2^{\frac{N-4}{N}-2}}{2N^2}\alpha^2=1+\frac{N-4}{N}\frac{\alpha}{2}-c_{4,N}\alpha^2,
%\end{multline}
%with $c_{4,N}>0$. We have a similar result for $r_2^{N-2}$ and $r_2^{N-1}$, in fact
%\begin{multline}\label{2n}
%r_2^{N-2}=1+\frac{N-2}{N}\frac{\alpha}{2}-\frac{(N-2)\left(1+\frac{\xi_2}{2}\right)^{\frac{N-2}{N}-2}}{4N^2}\\
%\leq 1+\frac{N-2}{N}\frac{\alpha}{2}-\frac{(N-2)2^{\frac{N-2}{N}-2}}{4N^2}=1+\frac{N-2}{N}\frac{\alpha}{2}-c_{2,N}\alpha^2
%\end{multline}
%and
\begin{align}\label{1n}
r_2^{N-1}&=1+\frac{N-1}{N}\frac{\alpha}{2}-\frac{(N-1)\left(1+\frac{\xi}{2}\right)^{\frac{N-1}{N}-2}}{8N^2}\alpha^2\\
&\leq 1+\frac{N-1}{N}\frac{\alpha}{2}-\frac{(N-1)2^{\frac{N-1}{N}-2}}{8N^2}\alpha^2=1+\frac{N-1}{N}\frac{\alpha}{2}-c_{N}\alpha^2,\nonumber
\end{align}
for some $\xi\in(0,\alpha)$, where $c_{N}$ is a positive constant which depends only on $N$. Using \eqref{aster}, \eqref{passoB1}, \eqref{passoB2}, and \eqref{1n}, in the right-hand side of \eqref{ineq2}, we obtain:
\begin{align}\label{tofinal1}
\int_{B\setminus B_1}&N[\rho]dx-\int_{B_2\setminus B}N[\rho]dx \\
&\ge-N\omega_N\Big((3+\tau)(R(1)-R'(1))^2+2\tau R'(1)(R(1)-R'(1))\Big)\left(\frac{N-1}{N}\frac{\alpha}{2}- c_{N}\alpha^2\right)\nonumber\\
&\qquad+\omega_N\left(3(N-1)(R(1)-R'(1))^2+\tau R'(1)^2+\tau (N-1) R(1)^2\right)\frac{\alpha}{2}\nonumber\\
&\qquad -N\omega_N\tau R'(1)^2\frac{\alpha}{2}\nonumber\\
&=:C^{(2)}_{N,\tau}\alpha^2,\nonumber
\end{align}
where the constant $C^{(2)}_{N,\tau}>0$ is given by
\[
C^{(2)}_{N,\tau}=N\omega_N\left((3+\tau)(R(1)-R'(1))^2+2\tau R'(1)(R(1)-R'(1))\right)c_{N}.
\]
\normalsize
From \eqref{A}, \eqref{ineq2}, \eqref{passoA}, and \eqref{tofinal1}, it follows that
\[
\lambda_2(B)-\lambda_2(\Omega)\geq\frac{C^{(2)}_{N,\tau}}{C^{(1)}_{N,\tau}}\mathcal A(\Omega)^2,
\]
and therefore,
\begin{equation}\label{quantN-1}
\lambda_2(\Omega)\leq\lambda_2(B)\left(1-\frac{C_{N,\tau}^{(2)}}{\lambda_2(B)C_{N,\tau}^{(1)}}\mathcal A(\Omega)^2\right).
\end{equation}
The isoperimetric inequality is thus proved in the case of $\Omega$ with the same measure as the unit ball. The inequality for a generic domain $\Omega$ follows from scaling properties of the eigenvalues of problem \eqref{NeumannPDE}. Writing our eigenvalues as $\lambda_2(\tau,\Omega)$ to make explicit the dependence on the parameter $\tau$, we have
\begin{equation}\label{scaling}
\lambda_2(\tau,\Omega)=s^4\lambda_2(s^{-2}\tau,s\Omega),
\end{equation}
for all $s>0$. From \eqref{quantN-1} and taking $s=(\omega_N/|\Omega|)^{1/N}$ in \eqref{scaling}, it follows that for every $\Omega$ in $\mathbb R^N$ of class $C^1$ we have
\begin{align*}
\lambda_2(\tau,\Omega)&=s^4\lambda_2(s^{-2}\tau,s\Omega)\\
&\leq s^4\lambda_2(s^{-2}\tau,B)\left(1-\frac{C^{(2)}_{N,s^{-2}\tau}}{\lambda_2(s^{-2}\tau,B)C^{(1)}_{N,s^{-2}\tau}}\mathcal A(s\Omega)\right)\\
&=\lambda_2(\tau,\Omega^*)\left(1-\frac{C^{(2)}_{N,s^{-2}\tau}}{\lambda_2(s^{-2}\tau,B)C^{(1)}_{N,s^{-2}\tau}}\mathcal A(\Omega)\right).
\end{align*}
We set 
$$\eta_{N,\tau,|\Omega|}:=\frac{C^{(2)}_{N,s^{-2}\tau}}{\lambda_2(s^{-2}\tau,B)C^{(1)}_{N,s^{-2}\tau}}.$$
This concludes the proof of the theorem.
\end{proof}

%%%  if we are discussing the case $\sigma\neq0$, we should also cite \cite{prozkalamata}

\begin{rmk}  One generalization of the {biharmonic Neumann problem \eqref{NeumannPDE}} is to consider the case where the plate is made of a material with a nonzero Poisson's ratio $\sigma$, which replaces the term $|D^2u|^2$ in the Rayleigh quotient by $(1-\sigma)|D^2u|^2+\sigma(\Delta u)^2$. A partial result towards the non-quantitative form of the isopermetric inequality has been obtained for certain values of $\tau>0$ and $\sigma\in(-1/(N-1),1)$, proved by the second author in \cite{chasmanpreprint} (see also \cite{buoso15,prozkalamata}). In this case, the proof of Theorem \ref{NeumannQI} can be easily adapted, yielding
 \[
  \lambda_2(B)-\lambda_2(\Omega)\geq \frac{C^{(3)}_{N,\tau}}{C^{(1)}_{N,\tau}}\mathcal{A}(\Omega)+\frac{C^{(2)}_{N,\tau}}{C^{(1)}_{N,\tau}}\mathcal{A}(\Omega)^2,
 \]
 where $C^{(1)}_{N,\tau}$, $C^{(2)}_{N,\tau}$ are as in the proof of Theorem \ref{NeumannQI}, and
 \[
  C^{(3)}_{N,\tau}=\frac{1}{2}(R(1)-R'(1))^2(N-1)\sigma\Big(\sigma(N-1)(\sigma-2)+N-2\Big).
 \]
 This result is not particularly satisfying, since it carries all of the same limitations of the non-quantitative result (only being valid for certain $\tau$ and $\sigma$), and in some cases it is strictly worse, since $C^{(3)}_{N,\tau}$ is non-negative only when $0\leq \sigma\leq 1-1/\sqrt{N-1}$. 
\end{rmk}

\begin{rmk}
Even though we are able to give a quantitative isoperimetric inequality for the fundamental tone of problem \eqref{NeumannPDE}, very little is known in this regard for higher eigenvalues. To the best of our knowledge, only criticality results are available in the literature, where the ball is shown to be a critical domain under volume constraint (see, e.g., \cite{buoso15,buosolamberti15,buosoprovenzano}). However, as in the second-order case, the ball is not expected to be an optimizer for higher eigenvalues.
\end{rmk}

\section{Sharpness of the Neumann inequality}\label{sharpness_neumann}

In this section, we prove the sharpness of inequality \eqref{quantitative_neumann}.

\begin{thm}\label{theorem2}
Let $B$ be the unit ball in $\mathbb{R}^N$ centered at zero. There exist a family $\left\{\Omega_{\epsilon}\right\}_{\epsilon>0}$ of smooth domains and positive constants $c_1,c_2,c_3,c_4$ and $r_1, r_2,r_3,r_4$ independent of $\epsilon>0$ such that
\begin{equation}
\label{sharp1}
r_1\epsilon^2\le\Big||\Omega_{\epsilon}|-|B|\Big|\leq r_2\epsilon^2,
\end{equation}
\begin{equation}
\label{sharp2}
c_1\epsilon\leq c_2\mathcal A(\Omega_{\epsilon})\leq\frac{|\Omega_{\epsilon}\triangle B|}{|\Omega_{\epsilon}|}\leq c_3\mathcal A(\Omega_{\epsilon})\leq c_4\epsilon,
\end{equation}
and
\begin{equation}
\label{sharp3}
r_3\epsilon^2\le\left|\lambda_2(\Omega_{\epsilon})-\lambda_2(B)\right|\leq r_4\epsilon^2,
\end{equation}
for all $\epsilon\in(0,\epsilon_0)$, where $\epsilon_0>0$ is sufficiently small, {and $\lambda_2(\Omega_\epsilon)$, $\lambda_2(B)$ are the first positive eigenvalues of problem \eqref{NeumannPDE} on $\Omega_\epsilon$, $B$ respectively}.
\end{thm}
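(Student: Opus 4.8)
The plan is to follow the construction of Brasco--Pratelli (and its Steklov analogue of Brasco--De Philippis--Velichkov) adapted to the biharmonic Neumann operator, taking as the perturbing family a carefully chosen one-parameter deformation of the ball. Concretely, I would write $\Omega_\epsilon$ as a normal graph over $\partial B$, say $\partial\Omega_\epsilon=\{(1+\epsilon\psi(\theta))\theta:\theta\in\mathbb S^{N-1}\}$, where $\psi$ is a fixed smooth function on the sphere chosen to be $\ell^2$-orthogonal to the first spherical harmonics $x_1,\dots,x_N$ (this orthogonality is exactly the restriction alluded to in the introduction, cf.\ the reference to (\ref{perturbation}) and Remark \ref{directions}); a concrete admissible choice is a single higher-degree spherical harmonic, e.g.\ a degree-two harmonic, which makes all the expansions explicit. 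The estimate \eqref{sharp1} is then immediate from $|\Omega_\epsilon|=\omega_N+\epsilon\int_{\mathbb S^{N-1}}\psi\,d\sigma+O(\epsilon^2)$ together with a choice of $\psi$ (or a rescaling) arranged so that the first-order volume term vanishes, leaving $\big||\Omega_\epsilon|-|B|\big|$ of exact order $\epsilon^2$ (the lower bound $r_1\epsilon^2$ comes from the sign-definiteness of the $O(\epsilon^2)$ coefficient $\frac{N-1}{2}\int_{\mathbb S^{N-1}}\psi^2\,d\sigma>0$). For \eqref{sharp2}, the Fraenkel asymmetry of a normal graph perturbation is comparable to the $L^1$-distance of $\psi$ from the space of affine functions on the sphere; since $\psi\perp\{1,x_1,\dots,x_N\}$ and $\psi\not\equiv0$, this distance is bounded below and above by constants times $\epsilon$, which gives both the upper and lower bounds in \eqref{sharp2} with $\epsilon$-independent constants.

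The heart of the matter is \eqref{sharp3}, the two-sided $\epsilon^2$ bound on $|\lambda_2(\Omega_\epsilon)-\lambda_2(B)|$. For the upper bound I would use the variational characterization \eqref{ray}: plug in suitable trial functions obtained by transplanting the ball eigenfunctions $u_k$ through the near-identity diffeomorphism $\Phi_\epsilon:B\to\Omega_\epsilon$ induced by the boundary graph, after correcting them to have zero mean on $\Omega_\epsilon$ (the correction is $O(\epsilon)$ in the relevant norms but, because the $u_k$ already have zero mean on $B$ and $\psi\perp x_k$, its contribution to the Rayleigh quotient is only $O(\epsilon^2)$). Expanding the pulled-back quadratic forms $\int_{\Omega_\epsilon}|D^2 u|^2+\tau|Du|^2$ and $\int_{\Omega_\epsilon}u^2$ to second order in $\epsilon$ via the change of variables, the first-order terms are linear functionals of $\psi$ that vanish by the orthogonality $\psi\perp\{1,x_1,\dots,x_N\}$ (this is precisely why these directions are excluded), so the min over $k$ of the perturbed Rayleigh quotients is $\lambda_2(B)+O(\epsilon^2)$, giving $|\lambda_2(\Omega_\epsilon)-\lambda_2(B)|\le r_4\epsilon^2$. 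For the matching lower bound $r_3\epsilon^2$ — which is really the delicate assertion that the perturbation genuinely decreases (or at least moves) the eigenvalue by a definite amount — I would invoke the quantitative inequality \eqref{quantitative_neumann} just proved: since $\lambda_2(\Omega_\epsilon)\le\lambda_2(\Omega_\epsilon^*)(1-\eta\,\mathcal A(\Omega_\epsilon)^2)$ and $\lambda_2(\Omega_\epsilon^*)=\lambda_2(B)+O(|\Omega_\epsilon|-|B|)=\lambda_2(B)+O(\epsilon^2)$ by scaling \eqref{scaling}, while $\mathcal A(\Omega_\epsilon)\ge c_1\epsilon$ from \eqref{sharp2}, we get $\lambda_2(B)-\lambda_2(\Omega_\epsilon)\ge \eta c_1^2\lambda_2(B)\epsilon^2 - O(\epsilon^2)$; one then checks the constants are arranged (or $\psi$ normalized) so that this is $\ge r_3\epsilon^2>0$ for small $\epsilon$.

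I expect the main obstacle to be the second-order expansion of the biharmonic quadratic form under the diffeomorphism $\Phi_\epsilon$: unlike the Laplacian case, $|D^2u|^2$ transforms in a complicated way (Christoffel-symbol corrections, second derivatives of the deformation field), so one must track the $\epsilon$ and $\epsilon^2$ coefficients with care and verify that the first-order term is indeed a linear functional annihilated by the chosen $\psi$. A secondary subtlety is the sharp two-sided control of the Fraenkel asymmetry in \eqref{sharp2}: the upper bound $\mathcal A(\Omega_\epsilon)\le(c_2/c_1)\cdot(\text{something})\le c_4/c_2\cdot\epsilon$ is elementary (the symmetric difference with $B$ itself is $O(\epsilon)$), but the lower bound $c_1\epsilon\le c_2\mathcal A(\Omega_\epsilon)$ requires showing no \emph{other} ball does substantially better, which follows from a first-variation/compactness argument showing the optimal ball in \eqref{fra} is within $O(\epsilon^2)$ of $B$ and then using $\psi\not\perp$ (generic) affine functions only through the excluded modes. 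Both of these are exactly the points handled in \cite{brascopratelli} and \cite{brascosteklov}, so the work is in checking that the biharmonic operator introduces no new essential difficulty — only heavier bookkeeping.
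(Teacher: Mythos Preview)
Your proposal has the right geometric setup but contains two genuine gaps, one in the choice of $\psi$ and one in the direction of the trial-function argument.

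\textbf{The orthogonality condition is too weak, and a degree-two harmonic is exactly the wrong example.} You require only $\psi\perp\{1,x_1,\dots,x_N\}$ and suggest a degree-two spherical harmonic as an admissible $\psi$. But the first-order boundary term that must vanish is \emph{not} linear in $x$: for any $\lambda_2(B)$-eigenfunction $\xi=(a\cdot x)R(|x|)/|x|$ one has, on $\partial B$,
\[
|D^2\xi|^2+\tau|D\xi|^2=c_1\,(a\cdot x)^2,\qquad \xi^2=c_2\,(a\cdot x)^2,
\]
so the relevant first variation is $\int_{\partial B}(a\cdot x)^2\psi\,d\sigma$. This forces the additional condition $\int_{\partial B}(a\cdot x)^2\psi\,d\sigma=0$ for all $a$, which is precisely what the class $\mathcal P$ in \eqref{perturbation} imposes. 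A degree-two harmonic fails this (it is the ``ellipsoid'' direction that Remark~\ref{directions} explicitly rules out); you need a harmonic of degree at least three.

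\textbf{The trial-function argument points the wrong way.} Transplanting the ball eigenfunctions $u_k$ to $\Omega_\epsilon$ and taking the minimum of their Rayleigh quotients yields only
\[
\lambda_2(\Omega_\epsilon)\le \lambda_2(B)+O(\epsilon^2),
\]
which is already implied by the isoperimetric inequality and \eqref{sharp1}. The substantive half of \eqref{sharp3} is the \emph{lower} bound $\lambda_2(\Omega_\epsilon)\ge\lambda_2(B)-r_4\epsilon^2$, and no trial function on $\Omega_\epsilon$ can give that. The paper therefore runs the comparison in the opposite direction: take an $L^2$-normalized eigenfunction $u_\epsilon$ of $\Omega_\epsilon$, extend it to a $C^4$ function $\tilde u_\epsilon$ on a neighbourhood of $B$, subtract its mean $\delta=O(\epsilon)$ to obtain $v_\epsilon$ admissible in \eqref{ray} on $B$, and write
\[
\lambda_2(B)\le\frac{\lambda_2(\Omega_\epsilon)+R_1(\epsilon)}{1+R_2(\epsilon)-K\epsilon^2},
\]
with error terms $R_1,R_2$ coming from integrals over $B\triangle\Omega_\epsilon$. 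A priori $R_i=O(\epsilon)$ only. The key device is a bootstrap: (i) if $|R_i|\le C\omega(\epsilon)$ then $v_\epsilon$ is $\sqrt{\omega(\epsilon)}$-close in $C^3(\overline B)$ to some ball eigenfunction $\xi_\epsilon$ (spectral gap plus elliptic regularity); (ii) replacing $v_\epsilon$ by $\xi_\epsilon$ in $R_i$ and using $\int_{\partial B}(a\cdot x)^2\psi=0$ kills the leading $\epsilon$-term, so $|R_i|\le \tilde C\epsilon\sqrt{\omega(\epsilon)}$. Iterating once closes the loop: $\omega_0\le C\epsilon\sqrt{\omega_0}$ forces $\omega_0\le C^2\epsilon^2$, hence $\lambda_2(B)\le\lambda_2(\Omega_\epsilon)+\mathcal C\epsilon^2$. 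The lower bound $r_3\epsilon^2$ then follows, as you say, from Theorem~\ref{NeumannQI} together with \eqref{sharp1}--\eqref{sharp2}.

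In short: the heart of the proof is not a second-order expansion of pulled-back quadratic forms, but the reverse trial function $u_\epsilon\rightsquigarrow v_\epsilon$ on $B$ combined with a self-improving estimate on the error terms, and this mechanism only works under the full orthogonality $\psi\perp\{1,\,a\cdot x,\,(a\cdot x)^2\}$.
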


In order to prove Theorem \ref{theorem2}, we start by defining the family of domains $\left\{\Omega_{\epsilon}\right\}_{\epsilon>0}$ as follows (see Figure \ref{figura}):
\begin{equation}\label{family}
\Omega_{\epsilon}=\left\{x\in\mathbb R^N: x=0 {\text\ or\ }|x|<1+\epsilon\psi\left(\frac{x}{|x|}\right)\right\},
\end{equation}
where $\psi$ is a function belonging to the following class:
\begin{equation}\label{perturbation}
\mathcal{P}=\left\{\psi\in C^{\infty}(\partial B):\int_{\partial B}\psi d\sigma=\int_{\partial B}(a\cdot x)\psi d\sigma=\int_{\partial B}(a\cdot x)^2\psi d\sigma=0,\ \forall a\in\mathbb{R}^N\right\}.
\end{equation}
%%%%%%%% what's a?
\begin{figure}[hb]
    \centering
   
        \includegraphics[width=\textwidth]{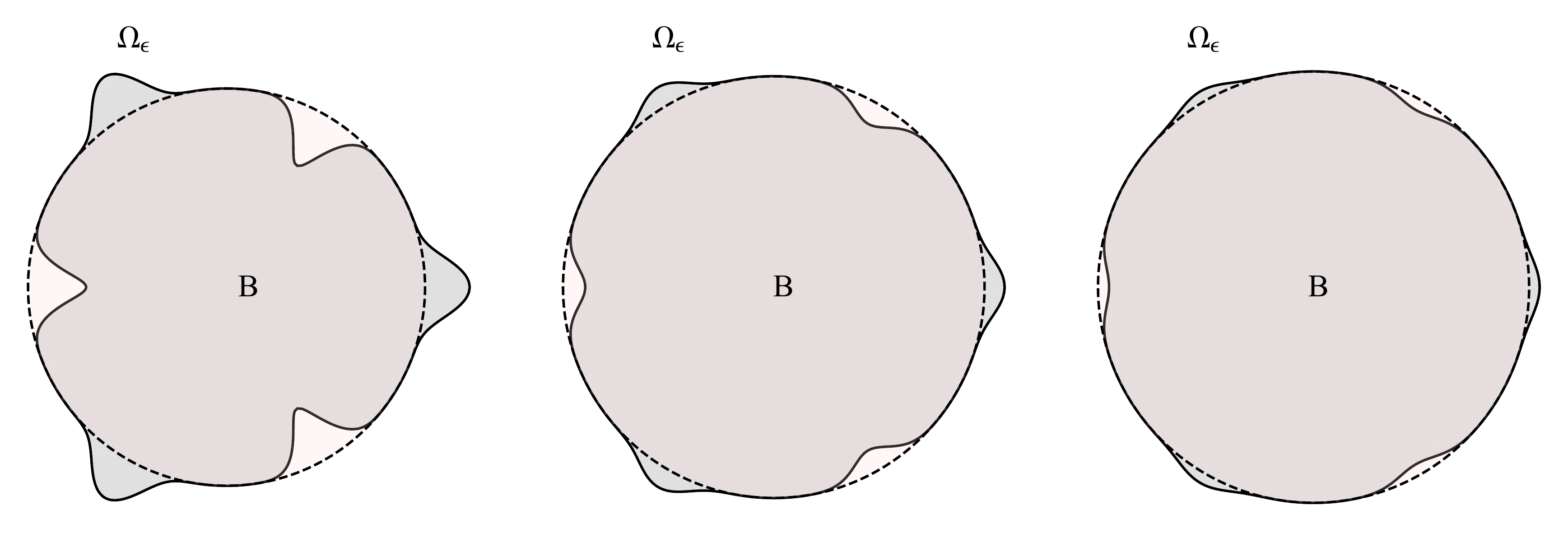}
        \caption{Domains $\Omega_{\varepsilon}$ defined by \eqref{family} with a given $\psi\in\mathcal P$.}
        \label{figura}

    \end{figure}

Under our choice of $\Omega_\epsilon$, the existence of constants $r_1,r_2,c_1,\dots,c_4$ satisfying inequalities \eqref{sharp1} and \eqref{sharp2} follow immediately from \cite[Lemma 6.2]{brascosteklov}. Thus, we need only prove \eqref{sharp3}.

Let $\lambda_2(\Omega_{\epsilon})$ be the first positive eigenvalue of the Neumann problem \eqref{NeumannPDE} on $\Omega_{\epsilon}$, and let $u_{\epsilon}$ be an associated eigenfunction normalized by $\|u_{\epsilon}\|_{L^2(\Omega_{\epsilon})}=1$,
so that
\[
\int_{\Omega_{\epsilon}}|D^2u_{\epsilon}|^2+\tau|Du_{\epsilon}|^2\,dx=\lambda_2(\Omega_{\epsilon}).
\]
By standard elliptic regularity (see e.g., \cite[\S 2.4.3]{gazzola}), since $\Omega_{\epsilon}$ is of class $C^{\infty}$ by construction, we may take a sufficiently small $\epsilon_0>0$ so that $u_{\epsilon}  \in C^{\infty}(\overline\Omega_{\epsilon})$ for all $\epsilon\in(0,\epsilon_0)$. Moreover, for all $k\in\mathbb N$, the sets $\Omega_{\epsilon}$ are of class $C^k$ uniformly in $\epsilon\in(0,\epsilon_0)$, which means that there exist constants $H_k>0$ independent of $\epsilon$ that satisfy
\begin{equation}\label{regularity}
\|u_{\epsilon}  \|_{C^k(\overline{\Omega_{\epsilon}})}\leq H_k.
\end{equation} 
Now let $\tilde{u}_{\epsilon} $ be a $C^4$ extension of $u_{\epsilon}$ to some open neighborhood $A$ of $B\cup\Omega_{\epsilon}$. Then, there exists $K_A>0$ independent of $\epsilon>0$ for which
\begin{equation}\label{regularity_extension}
\|\tilde{u}_{\epsilon} \|_{C^4(\overline A)}\leq K_A\|u_{\epsilon}  \|_{C^4(\overline{\Omega_{\epsilon}})}\leq K_A H_4.
\end{equation}
From the fact that $\int_{\Omega_{\epsilon}}u_{\epsilon} \,dx=0$ and $|B\setminus\Omega_{\epsilon}|,|\Omega_{\epsilon}\setminus B|\in O(\epsilon)$ as $\epsilon\rightarrow 0$, it follows that the quantity $\delta:=\frac{1}{|B|}\int_B\tilde{u}_{\epsilon}\,dx$ satisfies
\begin{equation}\label{delta_bound}
\delta=\frac{1}{|B|}\int_B\tilde{u}_{\epsilon} \,dx=\frac{1}{|B|}\left(\int_{B\setminus \Omega_{\epsilon}}\tilde{u}_{\epsilon}\,dx-\int_{\Omega_{\epsilon}\setminus B}u_{\epsilon} \,dx\right)\leq c\epsilon,
\end{equation}
where $c>0$ does not depend on $\epsilon\in(0,\epsilon_0)$. Now let us set %the function $v_{\epsilon}$ be defined by
\begin{equation}\label{test}
v_{\epsilon} :={\tilde u_{\epsilon|_B}}-\delta.
\end{equation}
The function $v_{\epsilon}$ is of class $C^4(\overline B)$ with $\int_B v_{\epsilon} \,dx=0$ and
\begin{equation}\label{regularity_v}
\|v_{\epsilon}  \|_{C^4(\overline B)}\leq K_1
\end{equation}
for some constant $K_1>0$ independent of $\epsilon\in(0,\epsilon_0)$. Therefore, $v_{\epsilon}$ is a suitable trial function for the Rayleigh quotient of $\lambda_2(B)$ (see formula \eqref{ray}). Thus,
\begin{equation}\label{minmax_1}
\lambda_2(B)\leq\frac{\int_B |D^2 v_{\epsilon}|^2+\tau|Dv_{\epsilon}|^2\,dx}{\int_B {v_{\epsilon}  }^2\,dx}.
\end{equation}
We now consider the quantity $\left|\int_B v_{\epsilon}  ^2-\tilde{u}_{\epsilon} ^2\,dx\right|$. We have
\begin{align}\label{ineq_1}
\left|\int_B v_{\epsilon}  ^2-\tilde{u}_{\epsilon} ^2\,dx\right|&=\left|\int_B \delta^2-2\delta\tilde{u}_{\epsilon}\,dx\right|=\left|\int_B\delta(v_{\epsilon}-\tilde{u}_{\epsilon})\,dx\right|\\
%&=\left|\int_B\frac{1}{|B|^2}\left(\int_B \tilde{u}_{\epsilon} dy\right)^2-\frac{2}{|B|}\left(\int_B \tilde{u}_{\epsilon} dy\right)\tilde{u}_{\epsilon}\,dx\right|=\frac{1}{|B|}\left(\int_B \tilde{u}_{\epsilon} \right)^2\leq K_2\epsilon^2,\nonumber
&=\frac{1}{|B|}\left(\int_B \tilde{u}_{\epsilon}\,dx \right)^2\leq K_2\epsilon^2,\nonumber
\end{align}
where $K_2>0$ is a positive constant independent of $\epsilon\in(0,\epsilon_0)$. Moreover, by \eqref{regularity_extension} and \eqref{regularity_v}, we have that 
\begin{align}\label{ineq_2}
\left|\int_{B\setminus\Omega_{\epsilon}}v_{\epsilon}  ^2-\tilde{u}_{\epsilon} ^2dx\right|&\leq \int_{B\setminus\Omega_{\epsilon}}|v_{\epsilon}  ^2-\tilde{u}_{\epsilon} ^2|dx\leq K_3\int_{B\setminus\Omega_{\epsilon}}|v_{\epsilon}  -\tilde{u}_{\epsilon} |dx\\
&=K_3\frac{|B\setminus\Omega_{\epsilon}|}{|B|}\left|\int_B\tilde{u}_{\epsilon}\,dx\right|\leq K_4\epsilon^2,\nonumber
\end{align}
where $K_3,K_4>0$ are positive constants independent of $\epsilon\in(0,\epsilon_0)$. Therefore, from \eqref{minmax_1}, \eqref{ineq_1}, and \eqref{ineq_2}, it follows that
\begin{align}\label{estimate_1}
\lambda_2(B)&\leq\frac{\int_{B\cap\Omega_{\epsilon}}|D^2u_{\epsilon}|^2+\tau|Du_{\epsilon}|^2dx+\int_{B\setminus\Omega_{\epsilon}}|D^2 v_{\epsilon}|^2+\tau|D v_{\epsilon}|^2\,dx}{\int_B\tilde{u}_{\epsilon} ^2dx-K_2\epsilon^2}\\
&\le\frac{\lambda_2(\Omega_{\epsilon})+\int_{B\setminus\Omega_{\epsilon}}|D^2v_{\epsilon}|^2+\tau|Dv_{\epsilon}|^2dx-\int_{\Omega_{\epsilon}\setminus B}|D^2 u_{\epsilon}|^2+\tau|Du_{\epsilon}|^2\,dx}{1+\int_{B\setminus\Omega_{\epsilon}}v_{\epsilon}  ^2dx-\int_{\Omega_{\epsilon}\setminus B}u_{\epsilon}^2dx-(K_2+K_4)\epsilon^2}.\nonumber
\end{align}
We introduce now the two error terms $R_1(\epsilon)$ and $R_2(\epsilon)$ defined by
\[
R_1(\epsilon):=\int_{B\setminus\Omega_{\epsilon}}|D^2v_{\epsilon}|^2+\tau|Dv_{\epsilon}|^2dx-\int_{\Omega_{\epsilon}\setminus B}|D^2u_{\epsilon}|^2+\tau|Du_{\epsilon}|^2\,dx
\]
and
\[
R_2(\epsilon):=\int_{B\setminus\Omega_{\epsilon}}v_{\epsilon}  ^2dx-\int_{\Omega_{\epsilon}\setminus B}u_{\epsilon}^2dx.
\]
Then inequality \eqref{estimate_1} can be rewritten as
\begin{equation}\label{estimate_2}
\lambda_2(B)\leq\frac{\lambda_2(\Omega_{\epsilon})+R_1(\epsilon)}{1+R_2(\epsilon)-K_5\epsilon^2}.
\end{equation}
From the uniform estimates \eqref{regularity} and \eqref{regularity_v} on $u_{\epsilon}$ and $v_{\epsilon}$, it easily follows that $R_1,R_2\in O(\epsilon)$ as $\epsilon\rightarrow 0$, which together with \eqref{estimate_2} immediately yields $\lambda_2(B)\leq\lambda_2(\Omega_{\epsilon})+C\epsilon$ for some constant $C>0$ independent of $\epsilon\in(0,\epsilon_0)$ (taking $\epsilon_0>0$ smaller if necessary).

We observe that, due to the strict relation of $R_1(\epsilon)$ and $R_2(\epsilon)$ with the difference $\lambda_2(B)-\lambda_2(\Omega_{\epsilon})$, a better estimate for $R_1(\epsilon)$ and $R_2(\epsilon)$ provides a better estimate for $\lambda_2(B)-\lambda_2(\Omega_{\epsilon})$. More precisely, we have the following
\begin{lem}\label{lemma_refinement}
Let $\omega:[0,1]\rightarrow[0,+\infty)$ be a continuous function such that $t^2/K\leq\omega(t)\leq K t$, for some $K>0$. If there exists a constant $C>0$ such that $|R_1(\epsilon)|$, $|R_2(\epsilon)|\leq C\omega(\epsilon)$, then there exists a constant $C'>0$ such that
\[
\lambda_2(B)\leq\lambda_2(\Omega_{\epsilon})+C'\omega(\epsilon)
\]
for every sufficient small $\epsilon>0$.
\end{lem}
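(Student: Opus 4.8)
The plan is to read the conclusion off directly from inequality \eqref{estimate_2} by elementary manipulations, using the two-sided control on $\omega$ to absorb every error term into a single multiple of $\omega(\epsilon)$; the lemma is essentially a bookkeeping statement, so no deep idea is involved.

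First I would record that the hypothesis $|R_2(\epsilon)|\le C\omega(\epsilon)\le CK\epsilon$ forces $R_2(\epsilon)\to 0$ as $\epsilon\to 0$, so that, after shrinking $\epsilon_0>0$ if necessary, the denominator $1+R_2(\epsilon)-K_5\epsilon^2$ appearing in \eqref{estimate_2} stays positive (say, bounded below by $1/2$) for all $\epsilon\in(0,\epsilon_0)$. Multiplying \eqref{estimate_2} through by this positive denominator and rearranging then yields
\[
\lambda_2(B)-\lambda_2(\Omega_{\epsilon})\le R_1(\epsilon)-\lambda_2(B)R_2(\epsilon)+\lambda_2(B)K_5\epsilon^2 .
\]

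Next I would estimate the right-hand side term by term. One has $R_1(\epsilon)\le|R_1(\epsilon)|\le C\omega(\epsilon)$ and $-\lambda_2(B)R_2(\epsilon)\le\lambda_2(B)|R_2(\epsilon)|\le\lambda_2(B)C\omega(\epsilon)$ by hypothesis, recalling that $\lambda_2(B)$ is a fixed constant depending only on $N$ and $\tau$. For the last summand this is precisely where the \emph{lower} bound $\omega(t)\ge t^2/K$ is used: it gives $\epsilon^2\le K\omega(\epsilon)$, hence $\lambda_2(B)K_5\epsilon^2\le \lambda_2(B)K_5K\,\omega(\epsilon)$. Collecting the three bounds shows that the right-hand side is at most $C'\omega(\epsilon)$ with $C':=C+\lambda_2(B)C+\lambda_2(B)K_5K$, a constant depending only on $C,K,K_5,N,\tau$, which proves the assertion.

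I do not expect any genuine obstacle; the only two points requiring a moment's attention are the ones just highlighted. The upper bound $\omega(t)\le Kt$ (indeed just continuity with $\omega(0)=0$) guarantees that the denominator in \eqref{estimate_2} does not degenerate, and the lower bound $\omega(t)\ge t^2/K$ is exactly what is needed to dominate the irreducible $O(\epsilon^2)$ contribution — here $\lambda_2(B)K_5\epsilon^2$, and, in the applications of the lemma, also the terms $K_2\epsilon^2$ and $K_4\epsilon^2$ from \eqref{ineq_1} and \eqref{ineq_2}. This is what makes the lemma an effective bootstrap: any improved estimate $|R_1(\epsilon)|,|R_2(\epsilon)|\le C\omega(\epsilon)$ with $\omega$ squeezed between $t^2/K$ and $Kt$ automatically upgrades to $\lambda_2(B)-\lambda_2(\Omega_{\epsilon})\le C'\omega(\epsilon)$, and the argument can be iterated with successively smaller moduli down to the optimal rate $\omega(t)\sim t^2$.
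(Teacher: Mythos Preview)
Your argument is correct and complete: the lemma is indeed just bookkeeping from \eqref{estimate_2}, and your use of the lower bound $\omega(t)\ge t^2/K$ to absorb the $K_5\epsilon^2$ term is exactly the point. The paper itself does not give an independent proof but simply refers to \cite[Lemma~6.2]{brascopratelli} (see also \cite[Lemma~6.7]{brascosteklov}); the argument there is the same elementary manipulation you have written out, so your proposal matches the intended approach.
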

\begin{proof}
We refer to \cite[Lemma 6.2]{brascopratelli} for the proof (see also \cite[Lemma 6.7]{brascosteklov}).
\end{proof}

We also need the following
\begin{lem}\label{lemma2}
Let $\omega$ be a function as in Lemma \ref{lemma_refinement}, and let $v_{\epsilon}$ be as in \eqref{test}. Suppose that there exists $C>0$ such that for all $\epsilon>0$ sufficiently small we have $|R_1(\epsilon)|, |R_2(\epsilon)|\leq C\omega(\epsilon)$. Then there exists an eigenfunction $\xi_{\epsilon}$ associated with $\lambda_2(B)$ such that
\[
\|v_{\epsilon}  -\xi_{\epsilon}\|_{C^3(\overline B)}\leq\tilde C\sqrt{\omega(\epsilon)}
\]
for some $\tilde C>0$ independent of $\epsilon>0$.
\begin{proof} Take $\{\xi_n\}_{n\geq 1}$ to be an orthonormal basis of $L^2(B)$ consisting of eigenfunctions of problem \eqref{NeumannPDE} on the unit ball $B$. Note that from such a normalization, we have
\[
\int_B|D^2\xi_n|^2+\tau|D\xi_n|^2\,dx=\lambda_n(B)\,\quad\forall n\in\mathbb{N}.
\]

We may write $v_{\epsilon} =\sum_{n=1}^{+\infty}a_n(\epsilon)\xi_n$. Note that $a_1(\epsilon)\equiv 0$, since $v_{\epsilon}$ has zero integral mean over $B$ and $\xi_1$ is a constant. We have
\begin{align*}
\sum_{n=2}^{+\infty}a_n(\epsilon)^2-1&=\|v_{\epsilon}  \|_{L^2(B)}^2-1=\int_Bv_{\epsilon}  ^2dx-\int_{\Omega_{\epsilon}}u_{\epsilon}^2dx\\
&=\int_B(v_{\epsilon}^2-\tilde{u}_{\epsilon}^2)dx-\int_{B\setminus\Omega_{\epsilon}}(v_{\epsilon}  ^2-\tilde{u}_{\epsilon} ^2)dx+R_2(\epsilon).
\end{align*}
Then by using \eqref{ineq_1}, \eqref{ineq_2}, we obtain
\begin{equation}\label{asterisco}
\left|\sum_{n=2}^{+\infty}a_n(\epsilon)^2-1\right|\leq K_5\epsilon^2+C\omega(\epsilon)\leq C_1\omega(\epsilon). 
\end{equation}
We may now write 
\begin{align*}
\lambda_2(\Omega_{\epsilon})&=\int_{\Omega_{\epsilon}}|D^2u_{\epsilon}|^2+\tau|Du_{\epsilon}|^2dx\\
&=\int_B|D^2v_{\epsilon}|^2+\tau|Dv_{\epsilon}|^2\,dx+\int_{\Omega_{\epsilon}\setminus B}|D^2u_{\epsilon}|^2+\tau|Du_{\epsilon}|^2dx\\
&\qquad-\int_{B\setminus\Omega_{\epsilon}}|D^2v_{\epsilon}|^2+\tau|Dv_{\epsilon}|^2\,dx\\
&=\sum_{n=2}^{+\infty}a_n(\epsilon)^2\lambda_n(B)-R_1(\epsilon). 
\end{align*}
From Lemma \ref{lemma_refinement}, it follows that 
\[
|\lambda_2(B)-\lambda_2(\Omega_{\epsilon})|\leq C'\omega(\epsilon), 
\]
and therefore,
\begin{equation}\label{star1}
\left|\sum_{n=2}^{+\infty}a_n(\epsilon)^2\lambda_n(B)-\lambda_2(B)\right|=|\lambda_2(\Omega_{\epsilon})+R_1(\epsilon)-\lambda_2(B)|\leq  C_2\omega(\epsilon).
\end{equation}

By the symmetry of the ball, the first nonzero eigenvalue $\lambda_2(B)$ has multiplicity $N$, and so $\lambda_2(B)=\lambda_3(B)=\cdots=\lambda_{N+1}(B)<\lambda_{N+2}(B)$.  Therefore,
\begin{align*}
C_2\omega(\epsilon)
&\geq\left|\sum_{n=2}^{N+1}a_n(\epsilon)^2\lambda_2(B)+\sum_{n=N+2}^{+\infty}a_n(\epsilon)^2\lambda_n(B)-\lambda_2(B)\right|\\
&=\left|\lambda_2(B)\left(\sum_{n=2}^{+\infty}a_n(\epsilon)^2-1\right)+\sum_{n=N+2}^{+\infty}a_n(\epsilon)^2\left(\lambda_n(B)-\lambda_2(B)\right)\right|\\
&\geq\left(\lambda_{N+2}(B)-\lambda_2(B)\right)\sum_{n=N+2}^{+\infty}a_n(\epsilon)^2-\lambda_2(B)C_1\omega(\epsilon),
\end{align*}
which yields
\begin{equation}\label{C3}
\sum_{n=N+2}^{+\infty}a_n(\epsilon)^2\leq C_3\omega(\epsilon),
\end{equation}
and hence by \eqref{asterisco},
\begin{equation}\label{C4}
\left|\sum_{n=2}^{N+1}a_n(\epsilon)^2-1\right|\leq C_4\omega(\epsilon).
\end{equation}
Revisiting \eqref{star1}, we see that
\begin{align*}
C_2\omega(\varepsilon)&\geq\left|\sum_{n=2}^{N+1}a_n(\epsilon)^2\lambda_2(B)+\sum_{n=N+2}^{+\infty}a_n(\epsilon)^2\lambda_n(B)-\lambda_2(B)\right|\\
&=\left|\lambda_2(B)\left(\sum_{n=2}^{N+1}a_n(\epsilon)^2-1\right)+\sum_{n=N+2}^{+\infty}a_n(\epsilon)^2\lambda_n(B)\right|\\
&\geq\lambda_2(B)\left(\sum_{n=2}^{N+1}a_n(\epsilon)^2-1\right)+\sum_{n=N+2}^{+\infty}a_n(\epsilon)^2\lambda_n(B),
\end{align*}
which, together with \eqref{C3} and \eqref{C4}, yields
\begin{equation}\label{star2}
\sum_{n=N+2}^{+\infty}a_n(\epsilon)^2\lambda_n(B)\leq C_2\omega(\epsilon)-\lambda_2(B)\left(\sum_{n=2}^{N+1}a_n(\epsilon)^2-1\right)\leq C_5\omega(\epsilon).
\end{equation}

Now set $\varphi:=\sum_{n=2}^{N+1}a_n(\epsilon)\xi_n$ and define the norm $\|\cdot\|_{H^2_{\tau}(B)}$ by
\[
\|h\|_{H^2_{\tau}(B)}^2:=\int_B |D^2h|^2+\tau|Dh|^2+h^2\,dx,\qquad \forall h\in H^2(B).
\]
This norm is equivalent to the standard $H^2(B)$-norm by coercivity of the bilinear form. 

We now estimate the quantity $\|v_{\epsilon}  -\varphi\|_{H^{2}_{\tau}(B)}$. We have
\begin{align*}
\|v_{\epsilon}  -\varphi\|^2_{H^2_{\tau}(B)}&=\int_B|D^2(v_{\epsilon}  -\varphi)|^2+\tau|D(v_{\epsilon}  -\varphi)|^2+(v_{\epsilon}  -\varphi)^2dx\\
&=\int_B\sum_{n=N+2}^{+\infty}a_n(\epsilon)^2(|D^2\xi_n|^2+\tau|D\xi_n|^2+\xi_n^2)dx\\
&=\sum_{n=N+2}^{+\infty}a_n(\epsilon)^2(1+\lambda_n(B))\leq C_6\omega(\epsilon),
\end{align*}
where the last inequality follows from \eqref{C3} and \eqref{star2}. Thus the function $v_{\epsilon}$ is $\sqrt{\omega(\epsilon)}$-close to $\varphi$ in the $H^2_{\tau}(B)$-norm. 

We want to bound the $C^3(\overline B)$-norm with the $H^2_{\tau}(B)$-norm. To do so, we use standard elliptic regularity estimates for the biharmonic operator. We have that, in $B\cap\Omega_{\epsilon}$,
\[
\Delta^2 v_{\epsilon}  -\tau\Delta v_{\epsilon}  =\Delta^2u_{\epsilon}  -\tau\Delta u_{\epsilon}  =\lambda_2(\Omega_{\epsilon})u_{\epsilon}  =\lambda_2(\Omega_{\epsilon})(v_{\epsilon}  +\delta).
\]
Recall that $\delta\in O(\epsilon)$ as $\epsilon\rightarrow 0$ from \eqref{delta_bound}. We set
\begin{equation*}%\label{feps}
f_{\epsilon}:=\Delta^2v_{\epsilon}  -\tau\Delta v_{\epsilon}  .
\end{equation*}
Note that, in particular, $f_{\epsilon}=\lambda_2(\Omega_{\epsilon})(v_{\epsilon}  +\delta)$ on $B\cap\Omega_{\epsilon}$. Then defining the functions $g_{\epsilon}^{(1)}$ and $g_{\epsilon}^{(2)}$ on $\partial B$ by $g_{\epsilon}^{(1)}:=\frac{\partial^2v_{\epsilon}  }{\partial\nu^2}$ and $g_{\epsilon}^{(2)}:=\tau\frac{\partial v_{\epsilon}  }{\partial\nu}-{\rm div}_{\partial B}(D^2 v_{\epsilon}  \cdot\nu)-\frac{\partial\Delta v_{\epsilon}  }{\partial\nu}$, we see that the function $v_{\epsilon}$ uniquely solves the problem
\begin{equation*}%\label{auxiliarypb}
\begin{cases}
\Delta^2u-\tau\Delta u=f_{\epsilon}, & {\rm in}\ B,\\
\frac{\partial^2u}{\partial\nu^2}=g_{\epsilon}^{(1)}, & {\rm on}\ \partial B,\\
\tau\frac{\partial u}{\partial\nu}-{\rm div}_{\partial B}(D^2u\cdot\nu)-\frac{\partial\Delta u}{\partial\nu}=g_{\epsilon}^{(2)}, & {\rm on}\ \partial B,\\
\int_{B}udx=0.
\end{cases}
\end{equation*}
 Now let $f:=\lambda_2(B)\varphi$. Then by definition, the function $\varphi$ is the unique solution of
\begin{equation*}
\begin{cases}
\Delta^2 u-\tau\Delta u=f, & {\rm in}\ B,\\
\frac{\partial^2u}{\partial\nu^2}=0, & {\rm on}\ \partial B,\\
\tau\frac{\partial u}{\partial\nu}-{\rm div}_{\partial B}(D^2u\cdot\nu)-\frac{\partial\Delta u}{\partial\nu}=0, & {\rm on}\ \partial B,\\
\int_B u\,dx=0.
\end{cases}
\end{equation*}
Finally, define the function $w:=v_{\epsilon}  -\varphi$, which is the unique solution of
\begin{equation*}
\begin{cases}
\Delta^2 w-\tau\Delta w=f_{\epsilon}-f, & {\rm in}\ B,\\
\frac{\partial^2w}{\partial\nu^2}=g_{\epsilon}^{(1)}, & {\rm on}\ \partial B,\\
\tau\frac{\partial w}{\partial\nu}-{\rm div}_{\partial B}(D^2w\cdot\nu)-\frac{\partial\Delta w}{\partial\nu}=g_{\epsilon}^{(2)}, & {\rm on}\ \partial B,\\
\int_B w\,dx=0.
\end{cases}
\end{equation*}
For any $p>N$, we have (see e.g., \cite[Theorem 2.20]{gazzola})
\begin{equation}\label{gazzola_estimate}
\|w\|_{W^{4,p}(B)}\leq C\left(\|f_{\epsilon}-f\|_{L^p(B)}+\|g_{\epsilon}^{(1)}\|_{W^{2-\frac{1}{p},p}(\partial B)}+\|g_{\epsilon}^{(2)}\|_{W^{1-\frac{1}{p},p}(\partial B)}\right).
\end{equation}
We consider separately the three summands in the right-hand side of \eqref{gazzola_estimate}. We start from the first summand. Recall that for any $x\in B\cap \Omega_{\epsilon}$, we have (see \eqref{regularity_v})
\[
f_{\epsilon}(x)=\lambda_2(\Omega_{\epsilon})(v_{\epsilon}  (x)+\delta).
\]
Since $\delta\in O(\epsilon)$ and $\lambda_2(\Omega_\epsilon)$ is bounded from above and from below, we have that $f_{\epsilon}(x)=\lambda_2(\Omega_{\epsilon})v_{\epsilon}(x)+O(\epsilon)$, and thus, as $\epsilon\rightarrow 0$, for any $p>N$, we have  (cf.\ Lemma~\ref{lemma_refinement})
\begin{align}\label{gaz1}
\|f_{\epsilon}-f\|_{L^p(B)}&=\|\lambda_2(\Omega_{\epsilon})v_{\epsilon}  -\lambda_2(B)\varphi\|_{L^p(B)}+O(\epsilon)\\
&\leq |\lambda_2(\Omega_{\epsilon})-\lambda_2(B)|\|v_{\epsilon}  \|_{L^p(B)}+|\lambda_2(B)|\|v_{\epsilon}  -\varphi\|_{L^p(B)}+O(\epsilon)\nonumber\\
&\leq C_7\omega(\epsilon)+C_8\sqrt{\omega(\epsilon)}+O(\epsilon)\leq C_9\sqrt{\omega(\epsilon)}.\nonumber
\end{align}

Now we consider the second summand in the right-hand side of \eqref{gazzola_estimate}. Since $g_{\epsilon}^{(1)}=\frac{\partial^2v_{\epsilon}  }{\partial\nu^2}$ and $v_{\epsilon}$ is an extension of $u_{\epsilon}$, by the regularity of both $u_{\epsilon}$ and $v_{\epsilon}$ (see \eqref{regularity}, \eqref{regularity_v}) and from the fact that $\frac{\partial^2u_{\epsilon} }{\partial\nu^2}=0$ on $\partial\Omega_{\epsilon}$, we may conclude
\begin{equation}\label{gaz2}
\|g_{\epsilon}^{(1)}\|_{W^{2-\frac{1}{p},p}(\partial B)}\leq C\epsilon.
\end{equation}
For the same reason, for the third summand in the right-hand side of \eqref{gazzola_estimate} we have
\begin{equation}\label{gaz3}
\|g_{\epsilon}^{(2)}\|_{W^{1-\frac{1}{p},p}(\partial B)}\leq C\epsilon.
\end{equation}

From \eqref{gazzola_estimate} and the bounds \eqref{gaz1}, \eqref{gaz2}, and \eqref{gaz3}, it follows that, for any $p>N$, 
\begin{equation*}
\|v_{\epsilon}  -\varphi\|_{W^{4,p}(B)}\leq C_{10}\sqrt{\omega(\epsilon)},
\end{equation*}
and thus, from the Sobolev embedding theorem, 
\begin{equation*}
\|v_{\epsilon}  -\varphi\|_{C^{3}(\overline B)}\leq \tilde C\sqrt{\omega(\epsilon)}.
\end{equation*}
The proof is concluded by setting $\xi_{\varepsilon}=\varphi$.
\end{proof}
\end{lem}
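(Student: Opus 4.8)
The plan is to treat $v_{\epsilon}$ as an approximate eigenfunction of the biharmonic Neumann operator on $B$ with approximate eigenvalue $\lambda_2(B)$, to control its distance to the genuine $\lambda_2(B)$-eigenspace first in the energy norm and then, by an elliptic bootstrap, in $C^3(\overline B)$.

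First I would fix an $L^2(B)$-orthonormal basis $\{\xi_n\}_{n\ge1}$ of eigenfunctions of problem \eqref{NeumannPDE} on $B$ and expand $v_{\epsilon}=\sum_{n\ge2}a_n(\epsilon)\xi_n$; the coefficient $a_1$ vanishes because $\int_B v_{\epsilon}\,dx=0$ while $\xi_1$ is constant. Comparing $\|v_{\epsilon}\|_{L^2(B)}^2$ with $\|u_{\epsilon}\|_{L^2(\Omega_{\epsilon})}^2=1$ by means of \eqref{ineq_1}, \eqref{ineq_2} and the hypothesis $|R_2(\epsilon)|\le C\omega(\epsilon)$ yields $\big|\sum_{n\ge2}a_n(\epsilon)^2-1\big|\le C\omega(\epsilon)$; likewise, since $\sum_{n\ge2}a_n(\epsilon)^2\lambda_n(B)=\int_B|D^2v_{\epsilon}|^2+\tau|Dv_{\epsilon}|^2\,dx=\lambda_2(\Omega_{\epsilon})+R_1(\epsilon)$ and Lemma~\ref{lemma_refinement} gives $|\lambda_2(\Omega_{\epsilon})-\lambda_2(B)|\le C\omega(\epsilon)$ under the standing hypothesis, one obtains $\big|\sum_{n\ge2}a_n(\epsilon)^2\lambda_n(B)-\lambda_2(B)\big|\le C\omega(\epsilon)$.

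Next I would exploit the spectral gap $\lambda_2(B)=\dots=\lambda_{N+1}(B)<\lambda_{N+2}(B)$ coming from the $N$-fold symmetry of the ball. Subtracting $\lambda_2(B)$ times the first estimate from the second forces $(\lambda_{N+2}(B)-\lambda_2(B))\sum_{n\ge N+2}a_n(\epsilon)^2\le C\omega(\epsilon)$, so both $\sum_{n\ge N+2}a_n(\epsilon)^2$ and $\sum_{n\ge N+2}a_n(\epsilon)^2\lambda_n(B)$ are $O(\omega(\epsilon))$. Setting $\varphi:=\sum_{n=2}^{N+1}a_n(\epsilon)\xi_n$, which is a genuine eigenfunction associated with $\lambda_2(B)$, orthogonality (in $L^2(B)$ and in the energy bilinear form) gives $\|v_{\epsilon}-\varphi\|_{H^2(B)}^2\le C\sum_{n\ge N+2}a_n(\epsilon)^2(1+\lambda_n(B))\le C\omega(\epsilon)$; thus $v_{\epsilon}$ is $\sqrt{\omega(\epsilon)}$-close to the $\lambda_2(B)$-eigenspace in $H^2(B)$.

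Finally I would upgrade this to $C^3(\overline B)$ by elliptic regularity for $\Delta^2-\tau\Delta$ with the boundary operators of \eqref{NeumannPDE} together with the zero-mean normalization (which removes the kernel). The function $w:=v_{\epsilon}-\varphi$ solves such a boundary value problem on $B$ with interior datum $(\Delta^2-\tau\Delta)v_{\epsilon}-\lambda_2(B)\varphi$ and boundary data $\partial^2v_{\epsilon}/\partial\nu^2$ and $\tau\,\partial v_{\epsilon}/\partial\nu-\mathrm{div}_{\partial B}(D^2v_{\epsilon}\cdot\nu)-\partial\Delta v_{\epsilon}/\partial\nu$ (all of which vanish when $v_{\epsilon}$ is replaced by $\varphi$). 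On $B\cap\Omega_{\epsilon}$ one has $(\Delta^2-\tau\Delta)v_{\epsilon}=\lambda_2(\Omega_{\epsilon})u_{\epsilon}=\lambda_2(\Omega_{\epsilon})(v_{\epsilon}+\delta)$ with $\delta=O(\epsilon)$ by \eqref{delta_bound}, so for any $p>N$ the interior datum is $O(\sqrt{\omega(\epsilon)})$ in $L^p(B)$ (using $|\lambda_2(\Omega_{\epsilon})-\lambda_2(B)|\le C\omega(\epsilon)$, the $H^2$-closeness of $v_{\epsilon}$ to $\varphi$, the uniform bounds \eqref{regularity_v}, and $|B\triangle\Omega_{\epsilon}|=O(\epsilon)$), while the boundary data are $O(\epsilon)$ in $W^{2-1/p,p}(\partial B)$ and $W^{1-1/p,p}(\partial B)$ respectively, because $\partial^2u_{\epsilon}/\partial\nu^2$ and the conormal expression vanish on $\partial\Omega_{\epsilon}$, the boundaries $\partial B$ and $\partial\Omega_{\epsilon}$ are $O(\epsilon)$-close, and $u_{\epsilon},v_{\epsilon}$ are uniformly bounded in $C^4$ by \eqref{regularity}, \eqref{regularity_v}. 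Feeding these into a $W^{4,p}$ a priori estimate for the problem and using the Sobolev embedding $W^{4,p}(B)\hookrightarrow C^3(\overline B)$ (valid since $p>N$) gives $\|w\|_{C^3(\overline B)}\le\tilde C\sqrt{\omega(\epsilon)}$, and one concludes by taking $\xi_{\epsilon}:=\varphi$.

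The hard part will be this last bootstrap: it requires a clean $W^{4,p}$ a priori estimate for $\Delta^2-\tau\Delta$ with the nonstandard boundary operators of \eqref{NeumannPDE} (and the zero-mean condition for uniqueness), and careful bookkeeping of the facts that $v_{\epsilon}$ solves the interior equation only on $B\cap\Omega_{\epsilon}$ and that its boundary data on $\partial B$ are merely $O(\epsilon)$ rather than exactly zero — both controlled through the uniform $C^4$ bounds and $|B\triangle\Omega_{\epsilon}|=O(\epsilon)$, but one must verify that every resulting error term stays dominated by $\sqrt{\omega(\epsilon)}$.
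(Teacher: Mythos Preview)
Your proposal is correct and follows essentially the same route as the paper's proof: eigenfunction expansion of $v_{\epsilon}$, control of $\sum_{n\ge2}a_n^2-1$ and $\sum_{n\ge2}a_n^2\lambda_n(B)-\lambda_2(B)$ via $R_1$, $R_2$ and Lemma~\ref{lemma_refinement}, the spectral-gap argument to bound the tail $\sum_{n\ge N+2}a_n^2(1+\lambda_n(B))$, the resulting $H^2$-closeness of $v_{\epsilon}$ to $\varphi=\sum_{n=2}^{N+1}a_n\xi_n$, and then the $W^{4,p}$ elliptic estimate for $w=v_{\epsilon}-\varphi$ (with the zero-mean normalization) together with the Sobolev embedding $W^{4,p}(B)\hookrightarrow C^3(\overline B)$. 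Your identification of the delicate points---that the interior equation for $v_{\epsilon}$ holds only on $B\cap\Omega_{\epsilon}$ and that the boundary data on $\partial B$ are $O(\epsilon)$ rather than zero---is exactly what the paper handles, invoking the uniform $C^4$ bounds \eqref{regularity}, \eqref{regularity_v} and the $O(\epsilon)$ proximity of $\partial\Omega_{\epsilon}$ to $\partial B$.
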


The next lemma gives us refined bounds on $|R_1(\epsilon)|$ and $|R_2(\epsilon)|$.
\begin{lem}\label{lemma3}
Let $\omega(t), v_{\epsilon}$ be as in Lemma \ref{lemma_refinement}. Suppose that for all $\epsilon>0$ small enough there exists an eigenfunction $\xi_{\epsilon}$ associated with $\lambda_2(B)$ such that
\begin{equation}\label{hypo}
\|v_{\epsilon}  -\xi_{\epsilon}\|_{C^3(\overline B)}\leq C \sqrt{\omega(\epsilon)},
\end{equation}
for some $C>0$ which does not depend on $\epsilon>0$. Then there exists $\tilde C>0$ which does not depend on $\epsilon$ such that $|R_1(\epsilon)|,|R_2(\epsilon)|\leq\tilde C\epsilon\sqrt{\omega(\epsilon)}$.
\begin{proof}
It is convenient to use spherical coordinates $(r,\theta)\in\mathbb{R}_{+}\times\mathbb{S}^{N-1}$ in $\mathbb R^N$ and the corresponding change of variables $x=\phi(r,\theta)$. %With abuse of notation, for any set $A\in\mathbb{R}^N$ we will still write $A$ in place of $\phi^{(-1)}(A)$. 
We denote by $\mathcal D$ and $\tilde{\mathcal D}$ the sets $\mathcal D:=\partial(\Omega_{\epsilon}\setminus B)\cap\partial B$ and $\tilde{\mathcal D}=\partial(B\setminus \Omega_{\epsilon})\cap\partial B$. Observe that $\psi\geq0$ on $\mathcal D$ and $\psi\le0$ on $\tilde{\mathcal D}$. 
 
Thanks to the regularity of $u_{\epsilon}$ and $\tilde{u}_{\epsilon}$ by \eqref{regularity_extension}, on $\Omega_{\epsilon}\setminus B$ we have
\begin{eqnarray*}
D^2u_{\epsilon}  \circ\phi(1+\epsilon\psi,\theta)&=&D^2u_{\epsilon}  \circ\phi(1,\theta)+O(\epsilon),\\
Du_{\epsilon}  \circ\phi(1+\epsilon\psi,\theta)&=&Du_{\epsilon}  \circ\phi(1,\theta)+O(\epsilon),
\end{eqnarray*}
as $\epsilon\rightarrow 0$. Therefore, integrating with respect to the radius $r$ and applying the definition of $v_{\epsilon}$ \eqref{test}, we see
\begin{align*}
\int_{\Omega_{\epsilon}\setminus B}|D^2u_{\epsilon}|^2+\tau|Du_{\epsilon}|^2dx
&=\epsilon\int_{\mathcal D }\psi\left(\left|D^2u_{\epsilon}  \right|^2+\tau \left|Du_{\epsilon}  \right|^2\right)d\sigma+O(\epsilon^2)\\
&=\epsilon\int_{\mathcal D }\psi\left(\left|D^2v_{\epsilon}  \right|^2+\tau \left|Dv_{\epsilon}  \right|^2\right)d\sigma+O(\epsilon^2),
\end{align*}
as $\epsilon\rightarrow 0$. Similarly,
\begin{equation*}
\int_{B\setminus\Omega_{\epsilon}}|D^2v_{\epsilon}|^2+\tau|Dv_{\epsilon}|^2dx
=-\epsilon\int_{\tilde{\mathcal D}}\psi\left(\left|D^2v_{\epsilon}  \right|^2+\tau \left|Dv_{\epsilon}  \right|^2\right)d\sigma+O(\epsilon^2),
\end{equation*}
as $\epsilon\rightarrow 0$. From these and hypothesis \eqref{hypo}, we see
\begin{align}\label{ineqR1}
|R_1(\epsilon)|&\leq \epsilon\left|\int_{\partial B}\psi\left(\left|D^2v_{\epsilon}  \right|^2+\tau \left|Dv_{\epsilon}  \right|^2\right)d\sigma\right|+O(\epsilon^2)\\
&\leq \epsilon\left|\int_{\partial B}\psi\left(\left|D^2\xi_{\epsilon}\right|^2+\tau \left|D\xi_{\epsilon}\right|^2\right)d\sigma\right|+C\epsilon\sqrt{\omega(\epsilon)}+O(\epsilon^2)\nonumber\\
&\le \tilde{C}\epsilon\sqrt{\omega(\epsilon)},\nonumber
\end{align}
as $\epsilon\rightarrow 0$. In the last inequality, we have used the following identity for eigenfunctions of $\lambda_2(B)$:
\begin{equation}\label{spherical_harmonic}
\left.\left(\left|D^2\xi_{\epsilon}\right|^2+\tau \left|D\xi_{\epsilon}\right|^2\right)\right|_{\partial B}=(a\cdot x)^2
\end{equation}
for some $a\in\mathbb R^N$ (cf.\ \eqref{uk}).

By following the same scheme, we can prove the analogue of \eqref{ineqR1} for $R_2(\epsilon)$. This concludes the proof.
\end{proof}
\end{lem}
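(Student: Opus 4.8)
The plan is to extract the leading-order term of $R_1(\epsilon)$ and $R_2(\epsilon)$ as $\epsilon\to0$, exploiting that $B\setminus\Omega_\epsilon$ and $\Omega_\epsilon\setminus B$ are thin spherical slivers of radial width $O(\epsilon)$, and then to show that the $O(\epsilon)$ coefficient vanishes thanks to the moment conditions defining $\mathcal P$ in \eqref{perturbation}. First I would pass to spherical coordinates $x=\phi(r,\theta)$, $(r,\theta)\in\mathbb{R}_+\times\mathbb{S}^{N-1}$, set $\mathcal D:=\partial(\Omega_\epsilon\setminus B)\cap\partial B$ (the part of $\partial B$ where $\psi\ge0$) and $\tilde{\mathcal D}:=\partial(B\setminus\Omega_\epsilon)\cap\partial B$ (where $\psi\le0$), and note that $\Omega_\epsilon\setminus B$ fibers over $\mathcal D$ with radial fiber $[1,1+\epsilon\psi(\theta)]$ while $B\setminus\Omega_\epsilon$ fibers over $\tilde{\mathcal D}$ with radial fiber $[1+\epsilon\psi(\theta),1]$. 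By the uniform $C^k$ bounds \eqref{regularity}, \eqref{regularity_extension}, \eqref{regularity_v}, the integrands $|D^2u_\epsilon|^2+\tau|Du_\epsilon|^2$, $|D^2v_\epsilon|^2+\tau|Dv_\epsilon|^2$, $u_\epsilon^2$, $v_\epsilon^2$ are $C^1$ with norms bounded uniformly in $\epsilon$; hence, writing the Euclidean volume element as $r^{N-1}dr\,d\sigma$ and using a zeroth-order Taylor expansion in $r$ about $r=1$ (with remainder controlled by $\|\psi\|_{C^1(\partial B)}$ and those uniform norms), one gets, e.g.,
\[
\int_{\Omega_\epsilon\setminus B}|D^2u_\epsilon|^2+\tau|Du_\epsilon|^2\,dx=\epsilon\int_{\mathcal D}\psi\big(|D^2u_\epsilon|^2+\tau|Du_\epsilon|^2\big)\,d\sigma+O(\epsilon^2),
\]
the corresponding integral over $B\setminus\Omega_\epsilon$ carrying an overall minus sign since $\psi\le0$ on $\tilde{\mathcal D}$.

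Since $v_\epsilon=\tilde u_\epsilon-\delta$ with $\delta\in O(\epsilon)$ and $\tilde u_\epsilon=u_\epsilon$ on $\Omega_\epsilon$, on $\mathcal D$ the first and second derivatives of $v_\epsilon$ and $u_\epsilon$ coincide, so $u_\epsilon$ may be replaced by $v_\epsilon$ in the $R_1$-integrand for free and in the $R_2$-integrand up to an $O(\epsilon^2)$ error. Summing the two slivers and using $\mathcal D\cup\tilde{\mathcal D}=\partial B$ up to a null set, I would obtain
\[
R_1(\epsilon)=-\epsilon\int_{\partial B}\psi\big(|D^2v_\epsilon|^2+\tau|Dv_\epsilon|^2\big)\,d\sigma+O(\epsilon^2),\qquad R_2(\epsilon)=-\epsilon\int_{\partial B}\psi\,v_\epsilon^2\,d\sigma+O(\epsilon^2).
\]
Now the hypothesis \eqref{hypo} enters: since $\|v_\epsilon-\xi_\epsilon\|_{C^3(\overline B)}\le C\sqrt{\omega(\epsilon)}$ and $\xi_\epsilon$ is bounded in $C^3(\overline B)$ uniformly in $\epsilon$ (because $\|\xi_\epsilon\|_{C^3}\le\|v_\epsilon\|_{C^3}+C\sqrt{\omega(\epsilon)}$, cf.\ \eqref{regularity_v}, and also by the bound \eqref{C4} on the finitely many coefficients of $\xi_\epsilon$), replacing $v_\epsilon$ by $\xi_\epsilon$ inside the two boundary integrals costs $O(\sqrt{\omega(\epsilon)})$ pointwise on $\partial B$, hence $O(\epsilon\sqrt{\omega(\epsilon)})$ after multiplying by $\epsilon$. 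This reduces everything to estimating $\epsilon\int_{\partial B}\psi(|D^2\xi_\epsilon|^2+\tau|D\xi_\epsilon|^2)\,d\sigma$ and $\epsilon\int_{\partial B}\psi\,\xi_\epsilon^2\,d\sigma$.

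At this point I would use that $\xi_\epsilon$ is an eigenfunction associated with $\lambda_2(B)$: by the explicit form \eqref{uk}, its restriction to $\partial B$ is the linear form $\rho(1)(c\cdot x)$, so $\xi_\epsilon^2|_{\partial B}=\rho(1)^2(c\cdot x)^2$, while identity \eqref{spherical_harmonic} gives $\big(|D^2\xi_\epsilon|^2+\tau|D\xi_\epsilon|^2\big)|_{\partial B}=(a\cdot x)^2$ for a suitable $a=a(\epsilon)\in\mathbb{R}^N$; in particular both restrictions are polynomials in $x$ of degree at most $2$. By the defining conditions of $\mathcal P$ in \eqref{perturbation}, $\psi$ is $L^2(\partial B)$-orthogonal to $1$, to every linear form $(a\cdot x)$, and to every $(a\cdot x)^2$ (hence, by polarization, to every monomial $x_ix_j$), so both boundary integrals vanish identically. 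Therefore $R_1(\epsilon),R_2(\epsilon)=O(\epsilon\sqrt{\omega(\epsilon)})+O(\epsilon^2)$; and since $\omega(\epsilon)\ge\epsilon^2/K$ forces $\epsilon^2\le\sqrt K\,\epsilon\sqrt{\omega(\epsilon)}$, we conclude $|R_1(\epsilon)|,|R_2(\epsilon)|\le\tilde C\,\epsilon\sqrt{\omega(\epsilon)}$ with $\tilde C$ independent of $\epsilon$.

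I expect the main obstacle to be purely the bookkeeping in the sliver expansion: making the change of variables and the zeroth-order Taylor remainder produce genuine $O(\epsilon^2)$ errors with constants depending only on $\|\psi\|_{C^1(\partial B)}$ and the uniform bounds \eqref{regularity}–\eqref{regularity_v}, and checking that the successive replacements $u_\epsilon\leftrightarrow v_\epsilon$ on $\mathcal D$ and $v_\epsilon\leftrightarrow\xi_\epsilon$ on $\partial B$ are legitimate at the claimed orders (using $\delta\in O(\epsilon)$ and \eqref{hypo}). The conceptual point, by contrast, is the elementary observation that the leading coefficient is the integral of $\psi$ against a degree-$\le2$ polynomial, which is exactly what the three constraints in \eqref{perturbation} were designed to annihilate; the argument for $R_2(\epsilon)$ is entirely parallel to that for $R_1(\epsilon)$, using $\xi_\epsilon^2|_{\partial B}=\rho(1)^2(c\cdot x)^2$ in place of \eqref{spherical_harmonic}.
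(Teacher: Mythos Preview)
Your proposal is correct and follows essentially the same route as the paper: pass to spherical coordinates, expand the thin-sliver integrals to leading order as $\epsilon$ times a boundary integral over $\partial B$, replace $u_\epsilon$ by $v_\epsilon$ and then $v_\epsilon$ by $\xi_\epsilon$ at the stated costs, and kill the surviving boundary integral using the moment conditions in $\mathcal P$ together with the explicit form of the $\lambda_2(B)$-eigenfunctions. Your write-up is in fact slightly more explicit than the paper's in two places---the polarization step showing that the three conditions in \eqref{perturbation} annihilate every degree-$\le 2$ polynomial on $\partial B$, and the observation $\epsilon^2\le\sqrt{K}\,\epsilon\sqrt{\omega(\epsilon)}$ needed to absorb the $O(\epsilon^2)$ remainder---but the strategy is identical.
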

We can now proceed to complete the proof of Theorem \ref{theorem2}. 

Let $\omega_0(\epsilon):=|R_1(\epsilon)|+|R_2(\epsilon)|$. This function is continuous in $\epsilon$ and, moreover, has the property
\[
\frac{\epsilon^2}{K}\leq\omega_0(\epsilon)\leq K\epsilon.
\]
The first inequality follows from Theorem \ref{NeumannQI}, while the latter follows from the fact that $R_1,R_2\in O(\epsilon)$. By Lemma \ref{lemma2}, it follows that there exists an eigenfunction $\xi_{\epsilon}$ of the Neumann problem \eqref{NeumannPDE} on $B$ associated with eigenvalue $\lambda_2(B)$ such that
\[
\|v_{\epsilon}  -\xi_{\epsilon}\|_{C^3(\overline{B})}\leq C\sqrt{\omega_0(\epsilon)}. 
\]
Now we apply Lemma \ref{lemma3}, obtaining
\[
\omega_0(\epsilon)\leq2\tilde C\epsilon\sqrt{\omega_0(\epsilon)},
\]
and therefore
\[
\sqrt{\omega_0(\epsilon)}=\frac{|R_1(\epsilon)|+|R_2(\epsilon)|}{\sqrt{\omega_0(\epsilon)}}\leq 2\tilde C\epsilon.
\]
From this, it follows that $\omega_0(\epsilon)\leq 4\tilde C^2\epsilon^2$, and hence both $|R_1(\epsilon)|,|R_2(\epsilon)|\leq 4\tilde C^2\epsilon^2 $. Finally, we apply Lemma \ref{lemma_refinement} and obtain
\[
\lambda_2(B)\leq\lambda_2(\Omega_{\epsilon})+\mathcal C\epsilon^2
\]
for a constant $\mathcal C>0$ independent of $\epsilon\in(0,\epsilon_0)$. This concludes the proof of Theorem \ref{theorem2}.

\begin{rmk}\label{directions}
In \cite{brascopratelli}, the authors provided an explicit construction of a family $\{\Omega_{\epsilon}\}_{\epsilon}$ in $\mathbb R^2$ suitable for proving the sharpness of their {quantitative isoperimetric inequality for the fundamental tone of the Neumann Laplacian}. On the other hand, in \cite{brascosteklov}, the authors gave only sufficient conditions to generate the family $\{\Omega_{\epsilon}\}_{\epsilon}$, which are exactly those we apply in \eqref{perturbation}. We observe that the first two conditions, namely
\begin{equation}\label{ellipses}
\int_{\partial B}\psi d\sigma=\int_{\partial B}(a\cdot x)\psi d\sigma=0,
\end{equation}
have a purely geometrical meaning, and are used to prove inequalities \eqref{sharp1} and \eqref{sharp2} (cf.\ \cite[Lemma 6.2]{brascosteklov}). The latter has a stricter relation with the problem, since any function $\xi$ belonging to the eigenspace associated with $\lambda_2(B)$ satisfies equality \eqref{spherical_harmonic}. This is due to the fact that $\xi$ can be expressed as a radial part times a spherical harmonic polynomial of degree $1$. This also tells us that the correct conditions to impose when considering the Steklov problem are still \eqref{perturbation}. In particular, as pointed out in \cite[Remark 6.9]{brascosteklov}, ellipsoids satisfy conditions \eqref{ellipses}, and hence inequalities \eqref{sharp1} and \eqref{sharp2} hold, but miss the final condition, and therefore are not a suitable family for this problem. Note that for the Dirichlet Laplacian case in \cite{brasco2015}, ellipsoids are a suitable family for proving the sharpness, and therefore conditions \eqref{ellipses} are sufficient.

We also observe that in \cite{brasco2015}, the construction is somewhat more general (cf.\ \cite[Theorem 3.3, pp.\ 1788-1789]{brasco2015}), while the perturbation used in \cite{brascopratelli} does not belong to \eqref{perturbation}. This means that it is possible to state less-restrictive conditions which would produce families of domains achieving the sharpness.
\end{rmk}

\section{Sharpness of the Steklov inequality}\label{sharpness_steklov}

In this section, we prove the sharpness of inequality \eqref{quantitative_bp}. Due to the strong similarities between the Steklov problem \eqref{SteklovPDE} and the Neumann problem \eqref{NeumannPDE}, we shall maintain the same notation as in the previous section.

\begin{thm}%\label{theorem2}
Let $B$ be the unit ball in $\mathbb{R}^N$ centered at zero. There exist a family $\left\{\Omega_{\epsilon}\right\}_{\epsilon>0}$ of smooth domains and positive constants $c_1,c_2,c_3,c_4$ and $r_1, r_2,r_3,r_4$ independent of $\epsilon>0$ such that
\begin{equation*}
%\label{sharp1}
r_1\epsilon^2\le\Big||\Omega_{\epsilon}|-|B|\Big|\leq r_2\epsilon^2,
\end{equation*}
\begin{equation*}
%\label{sharp2}
c_1\epsilon\leq c_2\mathcal A(\Omega_{\epsilon})\leq\frac{|\Omega_{\epsilon}\triangle B|}{|\Omega_{\epsilon}|}\leq c_3\mathcal A(\Omega_{\epsilon})\leq c_4\epsilon,
\end{equation*}
and
\begin{equation}
\label{sharp4}
r_3\epsilon^2\le\left|\lambda_2(\Omega_{\epsilon})-\lambda_2(B)\right|\leq r_4\epsilon^2,
\end{equation}
for all $\epsilon\in(0,\epsilon_0)$, where $\epsilon_0>0$ is sufficiently small, {and $\lambda_2(\Omega_\epsilon)$, $\lambda_2(B)$ is the first positive eigenvalue of problem \eqref{SteklovPDE} on $\Omega_\epsilon$, $B$ respectively}.
\end{thm}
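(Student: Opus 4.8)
The plan is to mirror, essentially verbatim, the argument used for the Neumann problem in Section~\ref{sharpness_neumann}, exploiting the near-identical variational structures of \eqref{SteklovPDE} and \eqref{NeumannPDE}. First I would recall the Rayleigh-quotient characterization of the first nontrivial Steklov eigenvalue,
\[
\lambda_2(\Omega)=\inf_{\substack{0\ne u\in H^2(\Omega)\\ \int_{\partial\Omega}u\,d\sigma=0}}\frac{\int_{\Omega}|D^2u|^2+\tau|Du|^2\,dx}{\int_{\partial\Omega}u^2\,d\sigma},
\]
the only structural difference from \eqref{ray} being that the $L^2$-normalization and the orthogonality constraint are taken on $\partial\Omega$ rather than on $\Omega$. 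As in the Neumann case I would take the same family $\{\Omega_\epsilon\}$ defined in \eqref{family} with $\psi\in\mathcal P$ (see \eqref{perturbation}), so that \eqref{sharp1} and \eqref{sharp2} again follow from \cite[Lemma 6.2]{brascosteklov}, and only \eqref{sharp4} remains. Letting $u_\epsilon$ be a normalized eigenfunction on $\Omega_\epsilon$, extending it to $\tilde u_\epsilon$ on a neighborhood $A$, correcting by the mean $\delta:=|\partial B|^{-1}\int_{\partial B}\tilde u_\epsilon\,d\sigma$ so that $v_\epsilon:=\tilde u_{\epsilon|_B}-\delta$ is admissible for the Steklov quotient on $B$, one gets the analogue of \eqref{estimate_2}, namely $\lambda_2(B)\le(\lambda_2(\Omega_\epsilon)+R_1(\epsilon))/(1+R_2(\epsilon)-K\epsilon^2)$, now with the boundary remainders
\[
R_1(\epsilon)=\int_{B\setminus\Omega_\epsilon}|D^2v_\epsilon|^2+\tau|Dv_\epsilon|^2\,dx-\int_{\Omega_\epsilon\setminus B}|D^2u_\epsilon|^2+\tau|Du_\epsilon|^2\,dx,
\]
\[
R_2(\epsilon)=\int_{\partial B\cap\partial(\cdot)}\!v_\epsilon^2-\!\int_{\cdots}u_\epsilon^2,
\]
where the second should be taken as the appropriate difference of boundary integrals over $\partial B$ and $\partial\Omega_\epsilon$; these are $O(\epsilon)$ by the uniform $C^k$ bounds \eqref{regularity}.

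Next I would invoke the refinement lemmas. Lemma~\ref{lemma_refinement} applies unchanged (it is purely a consequence of the elementary inequality $\lambda_2(B)(1+R_2-K\epsilon^2)\le\lambda_2(\Omega_\epsilon)+R_1$ together with the quantitative inequality \eqref{quantitative_bp}, which plays here the role that Theorem~\ref{NeumannQI} played before, giving $\omega_0(\epsilon)=|R_1|+|R_2|\ge\epsilon^2/K$). The analogue of Lemma~\ref{lemma2} goes through by expanding $v_\epsilon=\sum_{n\ge2}a_n(\epsilon)\xi_n$ in an $L^2(\partial B)$-orthonormal basis of Steklov eigenfunctions on $B$ — here one uses that the first nontrivial Steklov eigenvalue on the ball also has multiplicity $N$ with eigenfunctions of the form radial-part times a degree-one spherical harmonic (this is exactly the content of \cite{buosoprovenzano}) — and then applying the same elliptic-regularity estimate \cite[Theorem 2.20]{gazzola} for the fourth-order operator $\Delta^2-\tau\Delta$ with the Steklov-type boundary data $g_\epsilon^{(1)}=\partial^2 v_\epsilon/\partial\nu^2$, $g_\epsilon^{(2)}=\tau\partial v_\epsilon/\partial\nu-{\rm div}_{\partial B}(D^2v_\epsilon\cdot\nu)-\partial\Delta v_\epsilon/\partial\nu$; note that on $\partial\Omega_\epsilon$ we have $g^{(1)}=0$ and $g^{(2)}=\lambda_2(\Omega_\epsilon)u_\epsilon$, both bounded, so the boundary-data estimates are $O(\epsilon)$ exactly as in \eqref{gaz2}--\eqref{gaz3} (with a harmless extra bounded term in $g^{(2)}$). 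This yields an eigenfunction $\xi_\epsilon$ with $\|v_\epsilon-\xi_\epsilon\|_{C^3(\overline B)}\le\tilde C\sqrt{\omega_0(\epsilon)}$.

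Finally I would repeat Lemma~\ref{lemma3}: using spherical coordinates and the regularity of $u_\epsilon,\tilde u_\epsilon$, the bulk integrals over $\Omega_\epsilon\setminus B$ and $B\setminus\Omega_\epsilon$ reduce to $\epsilon$ times boundary integrals over $\partial B$ against $\psi$, plus $O(\epsilon^2)$; substituting $\xi_\epsilon$ for $v_\epsilon$ at the cost of $C\epsilon\sqrt{\omega_0(\epsilon)}$ and using the identity $\left(|D^2\xi_\epsilon|^2+\tau|D\xi_\epsilon|^2\right)|_{\partial B}=(a\cdot x)^2$ — valid because Steklov eigenfunctions for $\lambda_2(B)$, just like the Neumann ones in \eqref{uk}, are a radial profile times a linear spherical harmonic — together with the third defining condition of $\mathcal P$, namely $\int_{\partial B}(a\cdot x)^2\psi\,d\sigma=0$, kills the leading $\epsilon$ term and gives $|R_1(\epsilon)|,|R_2(\epsilon)|\le\tilde C\epsilon\sqrt{\omega_0(\epsilon)}$. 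Bootstrapping as at the end of Section~\ref{sharpness_neumann} forces $\omega_0(\epsilon)\le 4\tilde C^2\epsilon^2$, hence $|R_i(\epsilon)|\le 4\tilde C^2\epsilon^2$, and then Lemma~\ref{lemma_refinement} gives $\lambda_2(B)\le\lambda_2(\Omega_\epsilon)+\mathcal C\epsilon^2$, which combined with the lower bound from \eqref{quantitative_bp} yields \eqref{sharp4}. The only genuine point requiring care — and hence the main obstacle — is verifying that every place where the Neumann argument used ``$L^2$ on $\Omega$'' survives the replacement by ``$L^2$ on $\partial\Omega$'': in particular that $\delta$ and the quantities $\int_{\partial B}v_\epsilon^2-\tilde u_\epsilon^2\,d\sigma$, $\int_{\partial B\setminus\partial\Omega_\epsilon}\cdots$ are still $O(\epsilon)$ resp.\ $O(\epsilon^2)$ (they are, by the uniform $C^k$ estimates and $|B\triangle\Omega_\epsilon|=O(\epsilon)$, which also controls the relevant surface-measure differences on $\partial B$), and that the spectral-gap input $\lambda_2(B)<\lambda_{N+2}(B)$ and the multiplicity-$N$ eigenspace description hold for the Steklov problem on the ball — all of which are established in \cite{buosoprovenzano}. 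With these checks in place the proof is complete.
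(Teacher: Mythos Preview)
Your proposal is correct and follows essentially the same route as the paper: the same family \eqref{family} with $\psi\in\mathcal P$, the same extension/mean-correction to produce $v_\epsilon$, the same remainder scheme $\lambda_2(B)\le(\lambda_2(\Omega_\epsilon)+R_1)/(1+R_2-K\epsilon^2)$, and the same bootstrap via the three refinement lemmas. The paper likewise defines $R_2(\epsilon)=\int_{\partial B}v_\epsilon^2\,d\sigma-\int_{\partial\Omega_\epsilon}u_\epsilon^2\,d\sigma$ and, for the analogue of Lemma~\ref{lemma3}, treats $R_1$ exactly as you do but for $R_2$ defers to \cite[Lemma~6.8]{brascosteklov}, the only extra input being the observation that on $\partial B$ the Steklov eigenfunctions satisfy $\tau\,\partial_\nu\xi_\epsilon=\lambda_2(B)\xi_\epsilon$ (since ${\rm div}_{\partial B}(D^2\xi_\epsilon\cdot\nu)+\partial_\nu\Delta\xi_\epsilon=0$), so that the leading-$\epsilon$ contribution to $R_2$ is again a multiple of $\int_{\partial B}(a\cdot x)^2\psi\,d\sigma=0$; this is precisely the ``survival of the $L^2(\partial\Omega)$ replacement'' you flagged as the main point needing care, and your identification of it is accurate.
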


To prove this theorem, we begin by defining the family $\left\{\Omega_{\epsilon} \right\}_{\epsilon>0}$ as in \eqref{family}. Thus it remains only to prove \eqref{sharp4}.

We remind the reader of the variational characterization of the first positive eigenvalue of the Steklov problem \eqref{SteklovPDE} on a domain $\Omega$:
\begin{equation}\label{steklov-ray}
\lambda_2(\Omega)=\inf_{\substack{0\ne u\in H^2(\Omega)\\ \int_{\partial\Omega}u \,d\sigma=0}}\frac{\int_{\Omega}|D^2u|^2+\tau|Du|^2\,dx}{\int_{\partial\Omega}u^2\,d\sigma}.
\end{equation}

We take the first positive eigenvalue $\lambda_2(\Omega_{\epsilon} )$ of the Steklov problem \eqref{SteklovPDE} on $\Omega_{\epsilon} $, and let $u_{\epsilon} $ be an associated eigenfunction, normalized by
\[%\label{normalization1}
\int_{\partial\Omega_{\epsilon} }u_{\epsilon} ^2dx=1.
\]
Then by the variational characterization \eqref{steklov-ray},
\[%\label{normalization2}
\int_{\Omega_{\epsilon} }|D^2u_{\epsilon} |^2+\tau|\nabla u_{\epsilon} |^2 dx=\lambda_2(\Omega_{\epsilon} ).
\]
By standard elliptic regularity (see e.g., \cite[\S 2.4.3]{gazzola}), since $\Omega_{\epsilon} $ is of class $C^{\infty}$ by construction, we have that $u_{\epsilon} \in C^{\infty}(\overline{\Omega_{\epsilon}})$ for all $\epsilon\in(0,\epsilon_0)$. Moreover, for all $k\in\mathbb N$, the sets $\Omega_{\epsilon} $ are of class $C^k$ uniformly in $\epsilon\in(0,\epsilon_0)$, which means that there exist constants $H_k>0$ independent of $\epsilon$ such that
\[%\label{regularity_s}
\|u_{\epsilon} \|_{C^k(\overline{\Omega_{\epsilon}} )}\leq H_k.
\] 
Let now $\tilde{u}_{\epsilon} $ be a $C^4$ extension of $u_{\epsilon} $ to an open neighborhood $A$ of $B\cup\Omega_{\epsilon} $. Then, there exists $K_A>0$ independent of $\epsilon>0$ such that
\[%\label{regularity_extension_s}
\|\tilde{u}_{\epsilon} \|_{C^4(\overline A)}\leq K_A\|u_{\epsilon} \|_{C^4(\overline{\Omega_{\epsilon}} )}\leq K_A H_4.
\]
Analogous to the Neumann case, take $\delta:=\frac{1}{|\partial B|}\int_{\partial B}\tilde{u}_{\epsilon} \,d\sigma$ to be the mean of $\tilde{u}_{\epsilon}$ over $\partial B$. From the fact that $\int_{\partial\Omega_{\epsilon} }u_{\epsilon} dx=0$ and $|B\setminus\Omega_{\epsilon} |,|\Omega_{\epsilon} \setminus B|\in O(\epsilon)$ as $\epsilon\rightarrow 0$, it follows that, as $\epsilon\rightarrow0$ (see also \cite[formula (6.15)]{brascosteklov}),
\[
\delta=\frac{1}{|\partial B|}\int_{\partial B}\tilde{u}_{\epsilon} \,d\sigma\in O(\epsilon).
\]
Now let us set $v_{\epsilon}:=\tilde u_{\epsilon|_B}-\delta$. This function is of class $C^4(\overline B)$, satisfies $\int_{\partial B} v_{\epsilon} \,d\sigma=0$, and 
\[%\label{regularity_v_s}
\|v_{\epsilon} \|_{C^4(\overline B)}\leq K'
\]
for a constant $K'>0$ independent of $\epsilon\in(0,\epsilon_0)$. Therefore, $v_{\epsilon} $ is a suitable trial function for the Rayleigh quotient of $\lambda_2(B)$, hence,
\[%\label{minmax_1_s}
\lambda_2(B)\leq\frac{\int_B |D^2 v_{\epsilon} |^2+\tau|\nabla v_{\epsilon} |^2 dx}{\int_{\partial B} {v_{\epsilon} }^2 \,d\sigma}.
\]
On the other hand,
\[%\label{ineq_1}
\left|\int_{\partial B} v_{\epsilon} ^2-\tilde{u}_{\epsilon} ^2 \,d\sigma\right|=\left|\int_{\partial B} \delta^2-2\delta\tilde{u}_{\epsilon} \,d\sigma\right|\leq K''\epsilon^2,
\]
where $K''>0$ is a positive constant independent of $\epsilon\in(0,\epsilon_0)$.  Therefore, we may write
\[%\label{estimate_2}
\lambda_2(B)\leq\frac{\lambda_2(\Omega_{\epsilon} )+R_1(\epsilon)}{1+R_2(\epsilon)-\tilde K\epsilon^2},
\]
where we have once again defined the error terms
\[%\label{R1}
R_1(\epsilon):=\int_{B\setminus\Omega_{\epsilon} }|D^2v_{\epsilon} |^2+\tau|\nabla v_{\epsilon} |^2dx-\int_{\Omega_{\epsilon} \setminus B}|D^2u_{\epsilon} |^2+\tau|\nabla u_{\epsilon} |^2 dx,
\]
and
\[
R_2(\epsilon):=\int_{\partial B}v_{\epsilon} ^2\,d\sigma-\int_{\partial \Omega_{\epsilon} }u_{\epsilon} ^2\,d\sigma.
\]

At this point, we note that the observations made in Section \ref{sharpness_neumann} remain valid here. Therefore, in order to conclude the proof of \eqref{sharp4}, we need only a few lemmas.

\begin{lem}%\label{lemma_refinement}
Let $\omega$ be as in Lemma \ref{lemma_refinement}. If there exists a constant $C>0$ such that $|R_1(\epsilon)|, |R_2(\epsilon)|\leq C\omega(\epsilon)$, then there exists a constant $C'>0$ such that
\[
\lambda_2(B)\leq\lambda_2(\Omega_{\epsilon} )+C'\omega(\epsilon)
\]
for every $\epsilon>0$ sufficiently small.
\end{lem}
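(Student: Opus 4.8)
The plan is to mirror exactly the argument used in the Neumann case, exploiting the fact that the Steklov Rayleigh quotient \eqref{steklov-ray} differs from the Neumann one \eqref{ray} only in that the $L^2(\Omega)$-norm in the denominator is replaced by the $L^2(\partial\Omega)$-norm, while the numerator $\int_{\Omega}|D^2u|^2+\tau|Du|^2\,dx$ is identical. Since the inequality we start from,
\[
\lambda_2(B)\leq\frac{\lambda_2(\Omega_{\epsilon})+R_1(\epsilon)}{1+R_2(\epsilon)-\tilde K\epsilon^2},
\]
has precisely the same algebraic form as \eqref{estimate_2} in the Neumann section, the combinatorial manipulation that extracts a lower bound for $\lambda_2(B)-\lambda_2(\Omega_\epsilon)$ from an upper bound on $|R_1(\epsilon)|$, $|R_2(\epsilon)|$ is word-for-word the one already carried out for \cite[Lemma 6.2]{brascopratelli}.

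First I would recall that, since $|R_1(\epsilon)|,|R_2(\epsilon)|\le C\omega(\epsilon)$ with $\omega(t)\le Kt\to 0$, one may take $\epsilon$ small enough that $1+R_2(\epsilon)-\tilde K\epsilon^2\ge \tfrac12$. Then, cross-multiplying,
\[
\lambda_2(B)\bigl(1+R_2(\epsilon)-\tilde K\epsilon^2\bigr)\le \lambda_2(\Omega_\epsilon)+R_1(\epsilon),
\]
so that
\[
\lambda_2(B)-\lambda_2(\Omega_\epsilon)\le R_1(\epsilon)-\lambda_2(B)R_2(\epsilon)+\lambda_2(B)\tilde K\epsilon^2
\le \bigl(1+\lambda_2(B)\bigr)C\omega(\epsilon)+\lambda_2(B)\tilde K\epsilon^2.
\]
Because $\epsilon^2\le K\omega(\epsilon)$ by the hypothesis on $\omega$, the right-hand side is bounded by $C'\omega(\epsilon)$ with $C'$ depending only on $\lambda_2(B)$, $\tilde K$, $K$ and $C$, which is the claim. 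The only point that requires a sentence of care is that $\lambda_2(\Omega_\epsilon)$, and hence $\lambda_2(B)$ after this bound, stays in a fixed compact interval away from $0$ and $+\infty$; this follows from the uniform $C^k$-regularity of $\Omega_\epsilon$ together with continuity of Steklov eigenvalues under the relevant perturbations, exactly as invoked for the Neumann case.

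I do not expect any genuine obstacle here: the statement is the Steklov analogue of the already-proved Lemma \ref{lemma_refinement}, the proof is purely the elementary inequality-juggling above, and the paper itself signals this by writing ``we refer to \cite[Lemma 6.2]{brascopratelli} for the proof'' in the Neumann case. Accordingly I would simply write the proof as a two-line reduction to that reference, noting that nothing in the argument of \cite[Lemma 6.2]{brascopratelli} (or \cite[Lemma 6.7]{brascosteklov}) used the specific form of the denominator, only the structural identity \eqref{estimate_2}, which holds verbatim in the Steklov setting with the boundary $L^2$-norm in place of the bulk one.

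\begin{proof}
The argument is identical to that of Lemma \ref{lemma_refinement}. Since $|R_1(\epsilon)|,|R_2(\epsilon)|\le C\omega(\epsilon)$ and $\omega(\epsilon)\to 0$ as $\epsilon\to 0$, for $\epsilon$ small enough we have $1+R_2(\epsilon)-\tilde K\epsilon^2\ge 1/2$, so cross-multiplying in the inequality
\[
\lambda_2(B)\le\frac{\lambda_2(\Omega_{\epsilon})+R_1(\epsilon)}{1+R_2(\epsilon)-\tilde K\epsilon^2}
\]
yields
\[
\lambda_2(B)-\lambda_2(\Omega_{\epsilon})\le R_1(\epsilon)-\lambda_2(B)R_2(\epsilon)+\lambda_2(B)\tilde K\epsilon^2.
\]
Using $|R_1(\epsilon)|,|R_2(\epsilon)|\le C\omega(\epsilon)$, $\epsilon^2\le K\omega(\epsilon)$, and the fact that $\lambda_2(B)$ is a fixed constant, the right-hand side is bounded by $C'\omega(\epsilon)$ for a suitable $C'>0$ independent of $\epsilon$. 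We refer to \cite[Lemma 6.2]{brascopratelli} (see also \cite[Lemma 6.7]{brascosteklov}) for the details, which carry over verbatim since the structure of the relevant estimate is unchanged.
\end{proof}
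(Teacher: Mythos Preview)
Your proposal is correct and matches the paper's approach: the paper's own proof is simply the one-line citation ``See \cite[Lemma 6.7]{brascosteklov},'' and you have both reproduced the elementary cross-multiplication argument behind that reference and cited it. Nothing is missing.
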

\begin{proof}
See \cite[Lemma 6.7]{brascosteklov}.
\end{proof}

\begin{lem}%\label{lemma2_s}
Let $\omega$ be as in Lemma \ref{lemma_refinement}. Suppose that there exists $C>0$ such that for all $\epsilon>0$ sufficiently small we have $|R_1(\epsilon)|, |R_2(\epsilon)|\leq C\omega(\epsilon)$. Then there exists an eigenfunction $\xi_{\epsilon}$ associated with $\lambda_2(B)$ such that
\[
\|v_{\epsilon} -\xi_{\epsilon}\|_{C^3(\overline B)}\leq\tilde C\sqrt{\omega(\epsilon)},
\]
for some $\tilde C>0$ independent of $\epsilon>0$.
\begin{proof}
The proof is essentially identical to that of Lemma \ref{lemma2} and hence the details are omitted. Some small changes are necessary since $L^2(\Omega)$-norms have to be replaced by $L^2(\partial\Omega)$-norms, since we are considering the Steklov problem.
\end{proof}
\end{lem}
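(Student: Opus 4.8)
The plan is to run the argument of Lemma~\ref{lemma2} essentially verbatim, with the sole structural change that the $L^2(B)$-pairing of the Neumann problem is everywhere replaced by the $L^2(\partial B)$-pairing attached to the Steklov quotient \eqref{steklov-ray}. Fix an orthonormal basis $\{\xi_n\}_{n\geq 1}$ of $L^2(\partial B)$ made of Steklov eigenfunctions of \eqref{SteklovPDE} on $B$, so that $\xi_1$ is constant, $\int_B(D^2\xi_n):(D^2\xi_m)+\tau D\xi_n\cdot D\xi_m\,dx=\lambda_n(B)\delta_{nm}$, and (by the symmetry of the ball) $\lambda_2(B)=\cdots=\lambda_{N+1}(B)<\lambda_{N+2}(B)$. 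The one genuinely new point, and the one I expect to cost the most care, is that these eigenfunctions are orthonormal only on $\partial B$ and span only the boundary traces, so $v_{\epsilon}$ itself cannot be written as $\sum_n a_n\xi_n$. I would therefore put $a_n:=\int_{\partial B}v_{\epsilon}\xi_n\,d\sigma$, let $\phi_{\epsilon}:=\sum_{n\geq 2}a_n\xi_n$ be the energy-minimal $(\Delta^2-\tau\Delta)$-extension of $v_{\epsilon}|_{\partial B}$, and split $v_{\epsilon}=\phi_{\epsilon}+(v_{\epsilon}-\phi_{\epsilon})$; since $\partial^2_\nu\phi_{\epsilon}=0$ on $\partial B$ is the natural condition for that minimization and $v_{\epsilon}-\phi_{\epsilon}$ vanishes on $\partial B$, the two summands are orthogonal for the bilinear form $(u,w)\mapsto\int_B (D^2u):(D^2w)+\tau Du\cdot Dw\,dx$.

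Next I would carry out the spectral estimates. From $a_1=0$ (because $\int_{\partial B}v_{\epsilon}\,d\sigma=0$) one has $\sum_{n\geq 2}a_n^2=\|v_{\epsilon}\|_{L^2(\partial B)}^2=1+R_2(\epsilon)$, so $\left|\sum_{n\geq 2}a_n^2-1\right|\leq C\omega(\epsilon)$. The energy-orthogonal splitting, together with the identity $\int_B |D^2v_{\epsilon}|^2+\tau|Dv_{\epsilon}|^2\,dx=\lambda_2(\Omega_{\epsilon})+R_1(\epsilon)$, gives
\begin{equation*}
\sum_{n\geq 2}a_n^2\lambda_n(B)+\int_B\Big(|D^2(v_{\epsilon}-\phi_{\epsilon})|^2+\tau|D(v_{\epsilon}-\phi_{\epsilon})|^2\Big)\,dx=\lambda_2(\Omega_{\epsilon})+R_1(\epsilon),
\end{equation*}
and combining this with $\left|\lambda_2(\Omega_{\epsilon})-\lambda_2(B)\right|\leq C'\omega(\epsilon)$ (which follows from the preceding lemma of this section together with \eqref{quantitative_bp} and the scaling of the eigenvalues) and with $\sum_{n\geq 2}a_n^2\lambda_n(B)\geq\lambda_2(B)\sum_{n\geq 2}a_n^2$ yields both $\left|\sum_{n\geq 2}a_n^2\lambda_n(B)-\lambda_2(B)\right|\leq C\omega(\epsilon)$ and $\int_B|D^2(v_{\epsilon}-\phi_{\epsilon})|^2+\tau|D(v_{\epsilon}-\phi_{\epsilon})|^2\,dx\leq C\omega(\epsilon)$. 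The spectral-gap manipulation of Lemma~\ref{lemma2} then produces $\sum_{n\geq N+2}a_n^2\leq C\omega(\epsilon)$ and $\sum_{n\geq N+2}a_n^2\lambda_n(B)\leq C\omega(\epsilon)$. Setting $\varphi:=\sum_{n=2}^{N+1}a_n\xi_n$ (a nonzero element of the $\lambda_2(B)$-eigenspace for $\epsilon$ small) and using these bounds, the Poincar\'e-type inequality $\|g\|_{L^2(B)}^2\leq C\big(\|Dg\|_{L^2(B)}^2+\|g\|_{L^2(\partial B)}^2\big)$ on $H^2(B)$, and $\|h\|_{L^2(B)}^2\leq C\|Dh\|_{L^2(B)}^2$ for $h\in H^2(B)$ with $h|_{\partial B}=0$ (applied to $v_{\epsilon}-\phi_{\epsilon}$), one arrives at $\|v_{\epsilon}-\varphi\|_{H^2_{\tau}(B)}\leq C\sqrt{\omega(\epsilon)}$.

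Finally I would bootstrap this $H^2$-closeness to $C^3$-closeness as in Lemma~\ref{lemma2}. The function $w:=v_{\epsilon}-\varphi$ solves the biharmonic boundary value problem on $B$ with the same differential operator and boundary operators as in \eqref{NeumannPDE} (so \eqref{gazzola_estimate} applies with the same data spaces), with $\int_{\partial B}w\,d\sigma=0$; its interior datum is $f_{\epsilon}-f$ with $f=\Delta^2\varphi-\tau\Delta\varphi=0$, and — refining the choice of the extension $\tilde u_{\epsilon}$ so that $\Delta^2\tilde u_{\epsilon}-\tau\Delta\tilde u_{\epsilon}=0$ holds in a neighbourhood of $\partial\Omega_{\epsilon}$, which is possible because the Cauchy data of $u_{\epsilon}$ on $\partial\Omega_{\epsilon}$ are smooth and uniformly bounded in $\epsilon$ — one has $f_{\epsilon}=\Delta^2v_{\epsilon}-\tau\Delta v_{\epsilon}\equiv 0$ on $B$, so the interior datum vanishes identically. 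The boundary data are $g^{(1)}_{\epsilon}=\partial^2_\nu v_{\epsilon}|_{\partial B}$ and $g^{(2)}_{\epsilon}=\big(\tau\partial_\nu v_{\epsilon}-\mathrm{div}_{\partial B}(D^2v_{\epsilon}\cdot\nu)-\partial_\nu\Delta v_{\epsilon}\big)|_{\partial B}-\lambda_2(B)\varphi|_{\partial B}$; using $\partial^2_\nu u_{\epsilon}=0$ and the third boundary condition $=\lambda_2(\Omega_{\epsilon})u_{\epsilon}$ on $\partial\Omega_{\epsilon}$, the $O(\epsilon)$-closeness of $\partial\Omega_{\epsilon}$ to $\partial B$ with uniform $C^k$-bounds, $\left|\lambda_2(\Omega_{\epsilon})-\lambda_2(B)\right|\leq C'\omega(\epsilon)$, and the trace bound $\|v_{\epsilon}-\varphi\|_{W^{1-1/p,p}(\partial B)}\leq C\|v_{\epsilon}-\varphi\|_{W^{2,p}(B)}$, one checks $\|g^{(1)}_{\epsilon}\|_{W^{2-1/p,p}(\partial B)}+\|g^{(2)}_{\epsilon}\|_{W^{1-1/p,p}(\partial B)}\leq C\sqrt{\omega(\epsilon)}$, after a finite bootstrap on the exponent starting from the $H^2$-bound (exactly as in the Neumann argument, needed when $N\geq 4$ since $H^2$ does not embed directly into $W^{2,p}$ with $p>N$). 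Then \eqref{gazzola_estimate} gives $\|w\|_{W^{4,p}(B)}\leq C\sqrt{\omega(\epsilon)}$ for some $p>N$, and the Sobolev embedding yields $\|v_{\epsilon}-\varphi\|_{C^3(\overline B)}\leq\tilde C\sqrt{\omega(\epsilon)}$; taking $\xi_{\epsilon}:=\varphi$ finishes the proof.

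The step I expect to be the main obstacle is the spectral one: because the biharmonic Steklov eigenfunctions are orthonormal only in $L^2(\partial B)$ and do not span $H^2(B)$, the clean identities $\|v_{\epsilon}\|_{L^2}^2=\sum a_n^2$ and $\int_B|D^2v_{\epsilon}|^2+\tau|Dv_{\epsilon}|^2=\sum a_n^2\lambda_n$ used in the Neumann proof are simply false, so one is forced to detour through the energy-minimal extension $\phi_{\epsilon}$ and to dominate the zero-trace error $v_{\epsilon}-\phi_{\epsilon}$ by means of the coercivity of the energy form and of the Steklov Poincar\'e inequality (which is nothing but the positivity of $\lambda_2(B)$). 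A secondary, milder point is that, the eigenvalue now living in the third boundary condition rather than in the interior equation, the quantity $\lambda_2(\Omega_{\epsilon})-\lambda_2(B)$ enters the regularity estimate through the boundary datum $g^{(2)}_{\epsilon}$, so that the trace estimate and the attendant bootstrap in $p$ have to be carried out on $\partial B$ rather than in the interior. Neither difficulty is deep, which is why the conclusion may fairly be described as ``essentially identical'' to Lemma~\ref{lemma2}.
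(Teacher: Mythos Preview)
Your proposal follows exactly the route the paper intends --- adapt Lemma~\ref{lemma2}, replacing the $L^2(B)$-pairing by the $L^2(\partial B)$-pairing --- and in fact you go well beyond the paper's one-line sketch by spelling out the genuine new wrinkle: the Steklov eigenfunctions are complete only in $L^2(\partial B)$, so $v_{\epsilon}$ must be split as $\phi_{\epsilon}+(v_{\epsilon}-\phi_{\epsilon})$ with $\phi_{\epsilon}$ the energy-minimal lift of its boundary trace, the two pieces being orthogonal for the energy form. Your spectral-gap manipulation, the resulting $H^2_\tau$-bound on $v_{\epsilon}-\varphi$, and the observation that the eigenvalue now enters the regularity estimate through the boundary datum $g^{(2)}_{\epsilon}$ rather than the interior, are all correct and are precisely the ``small changes'' the paper alludes to.

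One step does not work as written. You cannot in general refine the extension $\tilde u_{\epsilon}$ so that $\Delta^2\tilde u_{\epsilon}-\tau\Delta\tilde u_{\epsilon}=0$ holds in a full neighbourhood of $\partial\Omega_{\epsilon}$: this is the Cauchy problem for an elliptic operator and is ill-posed even with smooth Cauchy data, so no uniform $C^4$-bound on such an extension is available. The repair is painless and closer in spirit to the Neumann argument: simply take $\tilde u_{\epsilon}$ of class $C^5$ (permitted by \eqref{regularity}, with \eqref{regularity_extension} upgraded accordingly), so that $f_{\epsilon}:=\Delta^2 v_{\epsilon}-\tau\Delta v_{\epsilon}\in C^1(\overline B)$. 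Since $f_{\epsilon}\equiv 0$ on $\overline{B\cap\Omega_{\epsilon}}$, one gets $|f_{\epsilon}(x)|\leq C\,\mathrm{dist}(x,\partial\Omega_{\epsilon})\leq C\epsilon$ on $B\setminus\Omega_{\epsilon}$, whence $\|f_{\epsilon}-f\|_{L^p(B)}=\|f_{\epsilon}\|_{L^p(B\setminus\Omega_{\epsilon})}=O(\epsilon)\leq C\sqrt{\omega(\epsilon)}$ (using $\omega(\epsilon)\geq \epsilon^2/K$), and your elliptic bootstrap via \eqref{gazzola_estimate} then goes through unchanged.
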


\begin{lem}%\label{lemma3}
Let $\omega$ be as in Lemma \ref{lemma_refinement}. Suppose that for all $\epsilon>0$ sufficiently small there exists an eigenfunction $\xi_{\epsilon}$ associated with $\lambda_2(B)$ such that
\[%\label{hypo}
\|v_{\epsilon} -\xi_{\epsilon}\|_{C^3(\overline B)}\leq C \sqrt{\omega(\epsilon)},
\]
for some $C>0$ independent of $\epsilon>0$. Then there exists $\tilde C>0$ independent of $\epsilon$ such that $|R_1(\epsilon)|,|R_2(\epsilon)|\leq\tilde C\epsilon\sqrt{\omega(\epsilon)}$.
\begin{proof}
Regarding the bound on $R_1$, we refer to the proof of Lemma \ref{lemma3}. For $R_2$, we refer to \cite[Lemma 6.8, p.\ 4701]{brascosteklov}, observing that if $\xi_{\epsilon}$ is an eigenfunction associated with $\lambda_2(B)$, then on $\partial B$, 
\[
{\rm div}_{\partial B}(D^2\xi_{\epsilon}\cdot\nu)+\frac{\partial\Delta\xi_{\epsilon}}{\partial\nu}=0,
\]
and therefore the second boundary condition in \eqref{SteklovPDE} reads as $\partial\xi_{\epsilon}/\partial\nu=\lambda_2(B)\xi_{\epsilon}$.
\end{proof}
\end{lem}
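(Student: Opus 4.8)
The argument is the Steklov counterpart of the proof of Lemma~\ref{lemma3}; the one genuinely new feature is that the boundary integral appearing in $R_2(\epsilon)$ is taken over the moving hypersurface $\partial\Omega_\epsilon$, so one must also keep track of the Jacobian of the induced surface measure. The term $R_1(\epsilon)$ is handled exactly as in the proof of Lemma~\ref{lemma3}: passing to spherical coordinates $x=\phi(r,\theta)$, Taylor-expanding $D^2u_\epsilon$ and $Du_\epsilon$ (equivalently $D^2v_\epsilon$, $Dv_\epsilon$, since $v_\epsilon$ and $\tilde u_\epsilon$ differ by a constant on $B$ while $\tilde u_\epsilon=u_\epsilon$ on $\overline{\Omega_\epsilon}$) about $r=1$, and integrating over the thin radial slices over $\mathcal D$ and $\tilde{\mathcal D}$, one obtains
\[
R_1(\epsilon)=-\epsilon\int_{\partial B}\psi\bigl(|D^2v_\epsilon|^2+\tau|Dv_\epsilon|^2\bigr)\,d\sigma+O(\epsilon^2).
\]
The assumed estimate $\|v_\epsilon-\xi_\epsilon\|_{C^3(\overline B)}\le C\sqrt{\omega(\epsilon)}$, combined with the uniform $C^4(\overline B)$ bound on $v_\epsilon$ (hence on $\xi_\epsilon$), lets us replace $v_\epsilon$ by $\xi_\epsilon$ in the integrand at cost $O(\epsilon\sqrt{\omega(\epsilon)})$; then the identity \eqref{spherical_harmonic} shows that the leading term is $-\epsilon\int_{\partial B}\psi\,(a\cdot x)^2\,d\sigma$, which vanishes by the third moment condition in \eqref{perturbation}. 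Since $\epsilon^2\le K\omega(\epsilon)$, the $O(\epsilon^2)$ error is absorbed, giving $|R_1(\epsilon)|\le\tilde C\epsilon\sqrt{\omega(\epsilon)}$.

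For $R_2(\epsilon)=\int_{\partial B}v_\epsilon^2\,d\sigma-\int_{\partial\Omega_\epsilon}u_\epsilon^2\,d\sigma$ I would argue as in \cite[Lemma 6.8, p.\ 4701]{brascosteklov}. Writing $\partial\Omega_\epsilon$ as the radial graph $r=1+\epsilon\psi(\theta)$ over $\mathbb{S}^{N-1}$, its surface element is $(1+\epsilon\psi)^{N-1}\bigl(1+O(\epsilon^2)\bigr)\,d\sigma_{\partial B}$; using $u_\epsilon=\tilde u_\epsilon$ on $\overline{\Omega_\epsilon}$ and the regularity of $\tilde u_\epsilon$ to Taylor-expand $u_\epsilon\bigl((1+\epsilon\psi(\theta))\theta\bigr)=\tilde u_\epsilon(\theta)+\epsilon\psi(\theta)\,\partial_\nu\tilde u_\epsilon(\theta)+O(\epsilon^2)$ on $\partial B$, and recalling that $\tilde u_\epsilon=v_\epsilon+\delta$ on $\partial B$ with $\delta\in O(\epsilon)$ and $\int_{\partial B}v_\epsilon\,d\sigma=0$, a multiplication and integration give
\[
\int_{\partial\Omega_\epsilon}u_\epsilon^2\,d\sigma=\int_{\partial B}v_\epsilon^2\,d\sigma+\epsilon\int_{\partial B}\psi\bigl(2v_\epsilon\,\partial_\nu v_\epsilon+(N-1)v_\epsilon^2\bigr)\,d\sigma+O(\epsilon^2),
\]
so that $R_2(\epsilon)=-\epsilon\int_{\partial B}\psi\bigl(2v_\epsilon\,\partial_\nu v_\epsilon+(N-1)v_\epsilon^2\bigr)\,d\sigma+O(\epsilon^2)$.

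It remains to control this surviving first-order term. Replacing $v_\epsilon$ by $\xi_\epsilon$ in the integrand, again at cost $O(\epsilon\sqrt{\omega(\epsilon)})$ by the hypothesis, I would exploit that $\xi_\epsilon$ is a genuine eigenfunction of \eqref{NeumannPDE} with eigenvalue $\lambda_2(B)$: on $\partial B$ one has ${\rm div}_{\partial B}(D^2\xi_\epsilon\cdot\nu)+\partial\Delta\xi_\epsilon/\partial\nu=0$, so $\partial_\nu\xi_\epsilon$ is a constant multiple of $\xi_\epsilon$ on $\partial B$ (cf.\ the second boundary condition in \eqref{SteklovPDE}); equivalently, by \eqref{uk}, $\xi_\epsilon|_{\partial B}=R(1)\,(a\cdot x)$ and $\partial_\nu\xi_\epsilon|_{\partial B}=R'(1)\,(a\cdot x)$ for some $a\in\mathbb{R}^N$. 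Hence $2\xi_\epsilon\,\partial_\nu\xi_\epsilon+(N-1)\xi_\epsilon^2$ equals a constant multiple of $(a\cdot x)^2$ on $\partial B$, and the leading term of $R_2(\epsilon)$ vanishes by the third moment condition in \eqref{perturbation}; together with $\epsilon^2\le K\omega(\epsilon)$ this yields $|R_2(\epsilon)|\le\tilde C\epsilon\sqrt{\omega(\epsilon)}$, which completes the proof. The main obstacle is the bookkeeping in the expansion of $\int_{\partial\Omega_\epsilon}u_\epsilon^2\,d\sigma$: there are two competing first-order contributions — the normal displacement of the boundary point and the distortion of the surface measure — and one must check that for true eigenfunctions their sum is proportional to $(a\cdot x)^2$ and is therefore annihilated by the class $\mathcal P$; the remainder is a routine transcription of the Neumann argument, with $L^2(\Omega)$-norms replaced by $L^2(\partial\Omega)$-norms.
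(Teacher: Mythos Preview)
Your argument is correct and follows the same line as the paper's: for $R_1$ both you and the authors simply invoke the proof of Lemma~\ref{lemma3}, and for $R_2$ the paper defers to \cite[Lemma~6.8]{brascosteklov} together with the same key observation you use, namely that ${\rm div}_{\partial B}(D^2\xi_\epsilon\cdot\nu)+\partial_\nu\Delta\xi_\epsilon=0$ on $\partial B$, so that $\partial_\nu\xi_\epsilon$ is a scalar multiple of $\xi_\epsilon$ and the leading boundary term is a multiple of $\int_{\partial B}\psi\,(a\cdot x)^2\,d\sigma=0$; you simply write out the surface-measure expansion explicitly rather than citing it. One small slip to fix: in this section $\xi_\epsilon$ is an eigenfunction of the \emph{Steklov} problem \eqref{SteklovPDE}, not of \eqref{NeumannPDE}, so its radial profile is not the function $R$ from \eqref{uk} and the constants $R(1),R'(1)$ are not the right ones --- but your argument only uses that $\xi_\epsilon|_{\partial B}$ and $\partial_\nu\xi_\epsilon|_{\partial B}$ are proportional to the same linear form $(a\cdot x)$, which holds equally for the Steklov eigenfunctions on the ball.
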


\section*{Acknowledgments}
The first and the third author wish to thank Berardo Ruffini for discussions on his paper \cite{brascosteklov}. The first author has been partially supported by the research project FIR (Futuro in Ricerca) 2013 `Geometrical and qualitative aspects of PDE's'.
The third author acknowledges financial support from the research project
`Singular perturbation problems for differential operators' Progetto di Ateneo
of the University of Padova and from the research project `INdAM GNAMPA Project 2015 - Un approccio funzionale analitico per problemi di perturbazione singolare e di omogeneizzazione'. 
The first and the third author are members of the Gruppo Nazionale
per l'Analisi Matematica, la Probabilit\`a e le loro Applicazioni (GNAMPA) of
the Istituto Naziona\-le di Alta Matematica (INdAM).

\bibliography{bibliography}{}

\def\cprime{$'$} \def\cprime{$'$} \def\cprime{$'$} \def\cprime{$'$}
\begin{thebibliography}{10}

\bibitem{abram}
M.~Abramowitz and I.~A. Stegun.
\newblock {\em Handbook of mathematical functions with formulas, graphs, and
  mathematical tables}, volume~55 of {\em National Bureau of Standards Applied
  Mathematics Series}.
\newblock For sale by the Superintendent of Documents, U.S. Government Printing
  Office, Washington, D.C., 1964.

\bibitem{antunesgazzola}
P.~R.~S. Antunes and F.~Gazzola.
\newblock Convex shape optimization for the least biharmonic {S}teklov
  eigenvalue.
\newblock {\em ESAIM: COCV}, 19(2):385--403, 2013.

\bibitem{ashbaugh}
M.~S. Ashbaugh and R.~D. Benguria.
\newblock On {R}ayleigh's conjecture for the clamped plate and its
  generalization to three dimensions.
\newblock {\em Duke Math. J.}, 78(1):1--17, 1995.

\bibitem{brascosteklov}
L.~Brasco, G.~De~Philippis, and B.~Ruffini.
\newblock Spectral optimization for the {S}tekloff-{L}aplacian: the stability
  issue.
\newblock {\em J. Funct. Anal.}, 262(11):4675--4710, 2012.

\bibitem{brasco2015}
L.~Brasco, G.~De~Philippis, and B.~Velichkov.
\newblock Faber{--}{K}rahn inequalities in sharp quantitative form.
\newblock {\em Duke Math. J.}, 164(9):1777--1831, 06 2015.

\bibitem{brascopratelli}
L.~Brasco and A.~Pratelli.
\newblock Sharp stability of some spectral inequalities.
\newblock {\em Geom. Funct. Anal.}, 22(1):107--135, 2012.

\bibitem{brock}
F.~Brock.
\newblock An isoperimetric inequality for eigenvalues of the {S}tekloff
  problem.
\newblock {\em ZAMM Z. Angew. Math. Mech.}, 81(1):69--71, 2001.

\bibitem{bucurgazzola11}
D.~Bucur and F.~Gazzola.
\newblock The first biharmonic {S}teklov eigenvalue: Positivity preserving and
  shape optimization.
\newblock {\em Milan Journal of Mathematics}, 79(1):247--258, 2011.

\bibitem{buoso15}
D.~Buoso.
\newblock Analyticity and criticality for the eigenvalues of the biharmonic
  operator.
\newblock To appear on the proceedings of the conference {G}eometric
  {P}roperties for {P}arabolic and {E}lliptic {PDE}'s.

\bibitem{buosolamberti15}
D.~Buoso and P.~Lamberti.
\newblock On a classical spectral optimization problem in linear elasticity.
\newblock In A.~Pratelli and G.~Leugering, editors, {\em New Trends in Shape
  Optimization}, volume 166 of {\em International Series of Numerical
  Mathematics}, pages 43--55. Springer International Publishing, 2015.

\bibitem{buosoprovenzano}
D.~Buoso and L.~Provenzano.
\newblock A few shape optimization results for a biharmonic {S}teklov problem.
\newblock {\em J. Differential Equations}, 259(5):1778--1818, 2015.

\bibitem{buosoprovenzano0}
D.~Buoso and L.~Provenzano.
\newblock On the eigenvalues of a biharmonic steklov problem.
\newblock In C.~Constanda and A.~Kirsch, editors, {\em Integral Methods in
  Science and Engineering: Theoretical and Computational Advances}, pages
  81--89, Cham, 2015. Springer International Publishing.

\bibitem{chasman11}
L.~Chasman.
\newblock Vibrational modes of circular free plates under tension.
\newblock {\em Applicable Analysis}, 90(12):1877--1895, 2011.

\bibitem{chasmanpreprint}
L.~Chasman.
\newblock An isoperimetric inequality for fundamental tones of free plates with
  nonzero poisson’s ratio.
\newblock {\em Applicable Analysis}, 95(8):1700--1735, 2016.

\bibitem{chasman}
L.~M. Chasman.
\newblock An isoperimetric inequality for fundamental tones of free plates.
\newblock {\em Comm. Math. Phys.}, 303(2):421--449, 2011.

\bibitem{dallarivaproz}
M.~Dalla~Riva and L.~Provenzano.
\newblock On vibrating thin membranes with mass concentrated near the boundary.
\newblock In preparation.

\bibitem{gazzola}
F.~Gazzola, H.-C. Grunau, and G.~Sweers.
\newblock {\em Polyharmonic boundary value problems}, volume 1991 of {\em
  Lecture Notes in Mathematics}.
\newblock Springer-Verlag, Berlin, 2010.
\newblock Positivity preserving and nonlinear higher order elliptic equations
  in bounded domains.

\bibitem{kuttler68}
J.~Kuttler and V.~Sigillito.
\newblock Inequalities for membrane and {S}tekloff eigenvalues.
\newblock {\em Journal of Mathematical Analysis and Applications}, 23(1):148 --
  160, 1968.

\bibitem{lambertiprozisaac}
P.~Lamberti and L.~Provenzano.
\newblock Viewing the {S}teklov {E}igenvalues of the {L}aplace {O}perator as
  {C}ritical {N}eumann {E}igenvalues.
\newblock In V.~V. Mityushev and M.~V. Ruzhansky, editors, {\em Current Trends
  in Analysis and Its Applications}, Trends in Mathematics, pages 171--178.
  Springer International Publishing, 2015.

\bibitem{lambertiproz}
P.~D. Lamberti and L.~Provenzano.
\newblock Neumann to {S}teklov eigenvalues: asymptotic and monotonicity
  results.
\newblock {\em Proceedings of the Royal Society of Edimburgh: A}, to appear,
  2016.

\bibitem{nadirashvili}
N.~S. Nadirashvili.
\newblock Rayleigh's conjecture on the principal frequency of the clamped
  plate.
\newblock {\em Arch. Rational Mech. Anal.}, 129(1):1--10, 1995.

\bibitem{prozkalamata}
L.~Provenzano.
\newblock A note on the {N}eumann eigenvalues of the biharmonic operator.
\newblock {\em Mathematical Methods in the Applied Sciences}, to appear, 2016.

\bibitem{steklov}
W.~Stekloff.
\newblock Sur les probl\`emes fondamentaux de la physique math\'ematique (suite
  et fin).
\newblock {\em Ann. Sci. \'Ecole Norm. Sup. (3)}, 19:455--490, 1902.

\bibitem{weinberger}
H.~F. Weinberger.
\newblock An isoperimetric inequality for the {$N$}-dimensional free membrane
  problem.
\newblock {\em J. Rational Mech. Anal.}, 5:633--636, 1956.

\end{thebibliography}
\bibliographystyle{abbrv}

\end{document}